\documentclass[11pt]{amsart}
\usepackage{amssymb,amscd,verbatim,graphicx}
\usepackage[T1]{fontenc}
\usepackage{mathrsfs}
\usepackage{xy}
\usepackage{bbm} 
\xyoption{all}
\usepackage{ulem}
\usepackage[colorlinks,linkcolor=blue,citecolor=blue,urlcolor=red]{hyperref}

\setlength{\textheight}{22cm}
\setlength{\textwidth}{15.5cm}
\setlength{\oddsidemargin}{0.5cm}
\setlength{\topmargin}{-1cm}
\setlength{\evensidemargin}{\oddsidemargin}
\setlength\marginparwidth{10pt}

\setcounter{tocdepth}{1}
\swapnumbers
\numberwithin{equation}{section}
\newtheorem{thm}{Theorem}[subsection]
\newtheorem{propose}[thm]{Proposition}
\newtheorem{lemma}[thm]{Lemma}
\newtheorem{cor}[thm]{Corollary}
\theoremstyle{definition}

\newtheorem{remark}[thm]{Remark}
\newtheorem{remarks}[thm]{Remarks}

\newcommand{\uOm}{\underline{\Omega}}
\newcommand{\bOm}{{\bf \Omega}}

\newcommand{\M}{\mathcal{M}_1}        
\newcommand{\tM}{{}^t\!\mathcal{M}_{1}}

\newcommand{\dr}{\mathrm{dR}}

\renewcommand{\d}{{\text{\LARGE $\cdot $}}}

\newcommand{\Hom}{\operatorname{Hom}}      
\newcommand{\Ext}{\operatorname{Ext}}      
\newcommand{\DM}{{\operatorname{DM}}}  
\newcommand{\DA}{{\operatorname{DA}}}



\newcommand{\Div}{\operatorname{Div}}

\newcommand{\eff}{{\operatorname{eff}}}
\newcommand{\ihom}{{\rm\underline{Hom}}}  



\newcommand{\Mod}{\mathrm{Mod}_{\Z,K}}

\newcommand{\C}{\mathbb{C}}     

\newcommand{\Q}{\mathbb{Q}}     
\newcommand{\Z}{\mathbb{Z}}     

\newcommand{\N}{\mathbb{N}}
\newcommand{\G}{\mathbb{G}}     
\newcommand{\EExt}{{\rm \mathbb{E}xt}} 
\newcommand{\HH}{\mathbb{H}}     

\renewcommand{\ker}{\operatorname{Ker}}  
\newcommand{\coker}{\operatorname{Coker}} 
\newcommand{\Pic}{\operatorname{Pic}}     
\newcommand{\RPic}{\operatorname{RPic}}     
\newcommand{\Alb}{\operatorname{Alb}}     
\newcommand{\LAlb}{\operatorname{LAlb}}     

\newcommand{\LA}[1]{\mbox{${\rm L}_{#1}{\rm Alb}$}}
\newcommand{\RA}[1]{\mbox{${\rm R}^{#1}{\rm Pic}$}}

\renewcommand{\Mod}{\operatorname{Mod}}

\newcommand{\Tot}{\operatorname{Tot}}     
\newcommand{\NS}  {\operatorname{NS}}      
\newcommand{\tors}{{\operatorname{tors}}}

\newcommand{\tr}{{\operatorname{tr}}}        
\newcommand{\qi}{{\rm q.i.}\,}      

\newcommand{\by}[1]{\stackrel{#1}{\rightarrow}}
\newcommand{\longby}[1]{\stackrel{#1}{\longrightarrow}}

\newcommand{\iso}{\stackrel{\sim}{\longrightarrow}}

\renewcommand{\tilde}{\widetilde}
\newcommand{\df}{\mbox{\,${:=}$}\,}
\newcommand{\ie}{{\it i.e.}, }
\newcommand{\cf}{{\it cf. }}
\newcommand{\eg}{{\it e.g. }}

\newcommand{\loccit}{{\it loc. cit. }}
\newcommand{\et} {{\rm \acute{e}t}}
\newcommand{\eh} {{\rm \acute{e}h}}

\newcommand{\Nis}{{\rm Nis}}
\newcommand{\cdh}{{\rm cdh}}
\newcommand{\an}{{\rm an}}

\newcommand{\fr}{{\rm fr}}

\newcommand{\gm}{{\rm gm}}

\renewcommand{\bar}{\overline}
\newcommand{\into}{\hookrightarrow}


\renewcommand{\lim}{\varprojlim}

\newcommand{\boxtensor}{\def\boxtimesten{\Box\kern-7.59pt\raise1.2pt
\hbox{$\times$} }}                                  

\newcounter{elno}                   

\newcommand{\cE}{\mathcal{E}}

\newcommand{\cM}{\mathcal{M}}

\newcommand{\cU}{\mathcal{U}}

\newcommand{\cZ}{\mathcal{Z}}
\newcommand{\cMR}{\mathcal{MR}}

\renewcommand{\phi}{\varphi}
\renewcommand{\epsilon}{\varepsilon}
\renewcommand*{\d}{{\scalebox{0.5}{$\bullet$}}}
\marginparsep 11pt

\begin{document}
\title{Homological periods and higher cycles}
\author[]{Luca Barbieri-Viale}
\address{Dipartimento di Matematica ``F. Enriques'', Universit{\`a} degli Studi di Milano\\ via C. Saldini, 50\\ Milano I-20133\\ Italy}
\email{Luca.Barbieri-Viale@unimi.it}
\keywords{Motives, periods, algebraic cycles}
\subjclass [2000]{14F42, 14F40, 14C30, 14L15}

\begin{abstract}
For any scheme which is algebraic over a subfield of the complex numbers we here construct an homological regulator from Suslin homology to period homology and a higher cycle class map from Bloch's higher Chow group to the period Borel-Moore homology. Over algebraic numbers, making use of the motivic Albanese, we provide a purely geometric description of these period homologies in degree 1 and we characterise the $\Q/\Z$-cokernel of these regulators in terms of torsion zero-cycles, showing that Grothendieck period conjectures imply generalised Ro\u{\i}tman theorems.
\end{abstract}

\maketitle

\section{Introduction}
Let $X$ be a scheme which we assume to be separated and of finite type over $K$ a subfield of the complex numbers. Denote by $$H^{p,q}_\varpi (X)\df H^p(X_\an, \Z_\an (q))\cap H^{p}_\dr(X)$$ the arithmetic invariant introduced by Grothendieck (see \cite{GrdR} and \cf \cite{ABVB}) which is given by the subgroup $$H^{p,q}_\varpi (X)\subseteq H^{p, q}_\an(X) := H^p(X_\an, \Z_\an (q))$$ of those $p$-classes in the $q$-twisted singular cohomology groups of $X_\an$ which are landing in de Rham cohomology $H^p_\dr (X)$ under $$\varpi^{p, q}_X : H^p(X_\an, \Z_\an (q))\otimes_\Z \C\cong H^p_\dr (X)\otimes_K \C$$
the period isomorphism.
Consider the Suslin-Voevodsky motivic complex $\Z(q)$ (\eg see \cite[Def. 3.1]{VL}) and let $H^{p, q}(X)$ be the \'etale motivic cohomology; recall that $$H^{p,q} (X)\cong H^{p}_\eh(X, \Z (q))$$ is computed by the $\eh$-cohomology (see \cite[\S 10.2]{BVK} for the $\eh$-topology) and refer to \cite{ABVB} where we have constructed an integrally defined cohomological period regulator  
$$r^{p,q}_\varpi\colon H^{p,q} (X)\to H^{p,q}_\varpi (X).$$

\subsection{Cohomological period conjectures} A version of the generalized Grothendieck period conjecture for motivic cohomology is the expectation that $r^{p,q}_\varpi$ is surjective over $K = \bar \Q$ (see \cite[\S 1.3]{ABVB} and \cf \cite{Bo} and \cite{BC} for a classical formulation of period conjectures involving cycles; see \cite{HG} for an updated account including motivic periods). The main result of \cite{ABVB} is the geometric description of the Grothendieck arithmetic invariant $H^{1,q}_\varpi (X)$, proving the period conjecture, for $p=1$ and all twists $q$.

Moreover, in general, $r^{p,q}_\varpi$ is always surjective on torsion and it is surjective if and only if  $r^{p,q}_\varpi$ is surjective rationally (see \cite[Sect. 1]{ABVB} and \cite{RS} for details). By the change of topology, $r^{p,q}_\varpi$ is providing $r^{p,q}_{\varpi,\Nis}$ as the following composition 
\[\xymatrixcolsep{3.4pc}\xymatrix{ H^{p, q}_\Nis(X) \ar[r]^{\nu^{p,q}}\ar@/^1.8pc/[rr]^{r^{p,q}_{\varpi,\Nis}} & H^{p, q}(X) \ar[r]^{r^{p,q}_\varpi}& H^{p,q}_\varpi (X)}\]
where $\nu^{p,q}$ is an isomomorphism with rational coefficients. 

If $X$ is smooth then $H^{p, q}_\Nis(X)\cong H^{p}_\Nis(X, \Z (q)) \longby{\simeq}CH^{q}(X, 2q-p)$ are Bloch's higher Chow groups (see \cite[Thm. 19.1]{VL}). Therefore, for $X$ smooth, we let the cohomological  higher cycle map $$c\ell^{p, q}_\varpi : CH^{q}(X, 2q-p)\to H^{p,q}_\varpi (X)$$ be induced by the composite $r^{p,q}_\varpi\circ \nu^{p,q}= r^{p,q}_{\varpi,\Nis}$ and this latter identification with the higher Chow groups; the previously mentioned cohomological period conjecture, \ie the surjiectivity of $r^{p,q}_{\varpi , \Nis}\otimes \Q$ over $K = \bar \Q$, is equivalent to the surjectivity of $c\ell^{p, q}_\varpi$ with rational coefficients.
If $X$ is smooth and projective over $K = \bar \Q$ then the period conjecture for motivic cohomology is equivalent to the vanishing
$$H^{p,q}_\varpi (X)_\Q = H^p(X_\an, \Q_\an (q))\cap H^{p}_\dr(X)=0$$
for $p\neq 2q$ and the siurjectivity of the classical cycle class map
$$c\ell^{2q, q}_\varpi : CH^{q}(X)_\Q\to H^{2q}(X_\an, \Q_\an (q))\cap H^{2q}_\dr(X)$$
as explained in \cite[Prop. 1.4.4]{ABVB}. In particular, since both Betti and de Rham cohomologies satisfy hard Lefschetz theorems and Hodge type conditions, the period conjecture implies Grothendieck standard conjectures over $K = \bar \Q$; actually, it implies that numerical equivalence coincide with homological equivalence thus the inverse of the Lefschetz operator is algebraically defined as well as the K\"unneth projectors.

\subsection{Homological periods} In general, for any $X$ algebraic $K$-scheme, we here show that there is also an homological jointly with a Borel-Moore version of the period regulator which fits better with higher cycles.  First note that in addition to $i$-twisted singular $j$-homology  $H_{j, i}^{\an}(X)$ and de Rham homology $H_j^{\dr} (X)$ we have de Rham homology with compact support  $H_j^{\dr,c} (X)$ (see \S \ref{deRhamsect}) along with homological de Rham regulators by Lemma \ref{regderham}; we also have Borel-Moore homology $H_{j, i}^{\an,c}(X)$ (see \S \ref{hompersect}) 
along with a homological regulator by Lemma \ref{compareBM}. Ayoub's version of the Grothendieck period isomorphism induces the following comparison isomorphisms
$$\varpi_{j, i}^X : H_{j, i}^{\an}(X)\otimes_\Z \C\cong H_j^{\dr} (X)\otimes_K \C\ \ \text{\rm and}\ \ \varpi_{j, i}^{X,c} : H_{j, i}^{\an,c}(X)\otimes_\Z \C\cong H_j^{\dr,c} (X)\otimes_K \C$$
in Lemma \ref{APLemma}, which are compatible with the regulator mappings by Proposition \ref{periodsquare}. This provides a corresponding Betti-de Rham and period homology (with values in an appropriate tensor abelian category of homological periods, see \S \ref{BdeR} and \cite[Def. 2.4.1 \& Lemma 2.4.2]{ABVB}) hence 
$$H_{j,i}^\varpi (X):=H_{j, i}^{\an}(X)\cap H_j^\dr (X)\ \ \text{\rm and}\ \ H_{j,i}^{\varpi,c} (X):=H_{j, i}^{\an, c}(X)\cap H_j^{\dr,c} (X)$$ along with regulators
$$r_{j,i}^{\varpi}\colon H_{j,i} (X)\to H_{j,i}^{\varpi}(X)\ \ \text{\rm and}\ \ r_{j,i}^{\varpi,c}\colon H_{j,i}^c (X)\to H_{j,i}^{\varpi, c}(X)$$
from \'etale motivic homology and that with compact support (see \S \ref{hompersect} for details).  We therefore obtain a canonical homological regulator or higher cycle map
$$c\ell_{j,i}^\varpi : CH_{i}(X, j-2i)\to H_{j,i}^{\varpi,c} (X)$$
for any equidimensional scheme $X$ of dimension $d$ and $0\leq i\leq d$ (see Corollary \ref{periodreg}). In fact, now we have the change of topology map $$\nu_{j,i} : H_{j,i}^{BM} (X)\to H_{j,i}^c (X)$$  where $H_{j,i}^{BM} (X)$ is Borel-Moore motivic homology as defined in  \cite[Def. 16.20]{VL} and $$H_{j,i}^{BM} (X)\cong CH_i (X,j-2i)$$  for any such $X$  by \cite[Prop. 19.18]{VL} where $CH_{i}(X, j-2i) := CH^{d-i}(X, j-2i)$. Thus, from $\nu_{j,i}$ we get a higher cycle map $c\ell_{j,i}$ (see \eqref{cycle}) whence $c\ell_{j,i}^\varpi$ is defined as $r_{j,i}^\varpi\circ c\ell_{j,i}$ under the latter identification with higher Chow groups.

\subsection{Higher zero cycles} For any scheme $X$ which is algebraic over $K = \bar \Q$, parallel to the cohomological period conjectures, it appears natural to raise the question about the surjectivity of the integrally defined homological period regulators $r_{j,i}^\varpi$ and $r_{j,i}^{\varpi,c}$  as well as for the higher cycle map $c\ell_{j,i}^\varpi$ with rational coefficients. Notably, for $X$ smooth and equidimensional, $d= \dim (X)$, $p= 2d-j$ and $q=d-i$, we have that the Borel-Moore homological and cohomological period regulators correspond under duality and the higher cycle class maps $c\ell^{2d-j, d-i}_\varpi$ and $ c\ell_{j,i}^\varpi$ agree rationally (see Lemma \ref{pdual} and Proposition \ref{regdual}). However, if $X$ is not smooth then the homological and cohomological period regulators do have different sources and targets (see \S \ref{dualsect} for details).

For $X$ with arbitrary singularities and higher zero cycles, \ie for $i=0$, we have an isomorphism $H_j^{Sus}(X)\iso H_{j,0} (X)$ with Suslin homology and also $CH_0(X, j)\iso H_{j,0}^c (X)$
over $K = \bar K$ (see \cite[Lemma 13.4.1]{BVK}), in such a way that the homological and cohomological period regulators are 
$$r_{j,0}^{\varpi}:H_j^{Sus}(X)\to H_{j,0}^{\varpi} (X),$$
$$ c\ell_{j,0}^\varpi\colon CH_0(X, j) \to H_{j,0}^{\varpi,c} (X)$$
and 
$$r^{2d-j, d}_{\varpi}\colon H^{2d-j, d}(X)\cong H^{2d-j}_\cdh(X, \Z (d)) \to H^{2d-j,d}_\varpi (X)$$
where $H_{j,0}^{\varpi} (X)_\Q\neq H_{j,0}^{\varpi,c} (X)_\Q$ and $H_{j,0}^{\varpi,c} (X)_\Q\neq H^{2d-j,d}_\varpi (X)_\Q$, in general. 

\subsection{1-Motivic period conjectures} It is natural to appeal to the existence of the motivic Albanese which provides a suitable context for computations because of the fully faithfulness of the de Rham-Betti realisation of 1-motives as proven in \cite[Theorem 2.7.1]{ABVB}. We can apply the motivic Albanese functor $\LAlb$  to the Voevodsky motives $M(X)$, $M^c(X)$ and $M(X)^*(d)[2d]$, for equidimensional algebraic $K$-schemes $X$ and  $d=\dim (X)$, obtaining the Albanese complex $\LAlb(X)$, the Borel-Moore Albanese complex $\LAlb^c(X)$ and the cohomological Albanese complex $\LAlb^*(X)$, respectively, and their Cartier duals $\RPic(X)$, $\RPic^c(X)$ and $\RPic^*(X)$, see \cite[Def. 8.5.1 \& Def. 8.6.2]{BVK}. Considering $j$-th homologies $\LA{j}(X)$, $\LA{j}^c(X)$ and $\LA{j}^*(X)$ and applying the period realisation of 1-motives $T_\omega$ (from \cite[\S 1.3 \& Cor. 3.1.3]{ABVB}), with rational coefficients, we thus obtain (see \S \ref{BdeR} for details)
\[\xymatrixcolsep{4pc}\xymatrix{ 
H_j^{Sus}(X)_\Q\ar[d]^{a\ell b_j}\ar[r]^{r^\varpi_{j,0}}&  H_{j,0}^{\varpi} (X)_\Q\ar[d]^{}\\ 
H_{j,0}^{(1)}(X)_\Q\ar@{.>}[r]^{r_{j,0}^{\varpi, (1)}}&  T_\omega^\Q(\LA{j}(X))}\]
and 
\[\xymatrixcolsep{4pc}\xymatrix{ 
CH_0(X, j)_\Q\ar[d]^{a\ell b_j^c}\ar[r]^{c\ell_{j,0}^\varpi}&  H_{j,0}^{\varpi,c} (X)_\Q\ar[d]^{}\\ 
H_{j,0}^{(1),c}(X)_\Q\ar@{.>}[r]^{c\ell_{j,0}^{\varpi, (1)}}&  T_\omega^\Q (\LA{j}^c(X))
}\]
where the maps $a\ell b_j$ and $a\ell b_j^c$ to $1$-motivic homologies (constructed in \cite[\S 13.1 ]{BVK}) are induced by the motivic Albanese map (see \cite[(13.1.1)]{BVK}, \eqref{hcyclemap} and Lemma \ref{cohreproj} for details).
Dually, we get 
\[\xymatrixcolsep{5pc}\xymatrix{ 
H^{2d-j}_\cdh(X, \Q (d))\ar[d]^{a\ell b_j^*}\ar[r]^{r^{2d-j, d}_{\varpi}}&  H^{2d-j,d}_\varpi (X)_\Q\ar[d]^{}\\ 
H^{2d-j}_{\cdh, (1)}(X, \Q (d))\ar@{.>}[r]^{r^{2d-j, d}_{\varpi , (1)}}&  T_\omega^\Q (\LA{j}^*(X))
}\]
where $a\ell b_j^*$ (as in \eqref{cohalbmap})  is now the cohomological motivic Albanese map (see \cite[\S 13.7]{BVK}). In general, we can show that the 1-motivic period conjecture holds true rationally: the maps $r_{j,0}^{\varpi, (1)}$, $c\ell_{j,0}^{\varpi, (1)}$ and $r^{2d-j, d}_{\varpi , (1)}$ are surjections for all $j\geq 0$ over $K=\bar \Q$ (see Theorem \ref{1-motivic}).

\subsection{Case $j=0,1$} For $j = 0, 1$ we know that (modulo torsion) $H_{j,0}^\varpi (X)\cong T_\omega (\LA{j}(X))$, $H_{j,0}^{\varpi,c} (X)\cong T_\omega (\LA{j}^c(X))$ and $H^{2d-j,d}_\varpi (X)\cong  T_\omega (\LA{j}^*(X))$ are 1-motivic for $d=\dim (X)$. To make use of the previous factorisation of the homological and cohomological period regulators via $a\ell b_j$, $a\ell b_j^c$ and $a\ell b_j^*$ we further need to know that such maps are surjections. This is the case when $M(X)$, $M^c(X)$ or $M(X)^*(d)[2d]$ are 1-motivic,  \eg for curves, so that the named maps are isomorphisms but it is not clear in general.

The period conjectures are verified for $j=0$ (Theorem \ref{zero}) but for $j=1$ we just can characterise the cokernels of $r^{2d-1, d}_{\varpi}\otimes\Q/\Z$ and $r_{1, 0}^{\varpi}\otimes\Q/\Z$ and $c\ell_{1,0}^\varpi\otimes\Q/\Z$ in terms of torsion classes in $H^{2d}_\cdh(X, \Z (d))$, $H_0^{Sus}(X)$ or $CH_0(X)$, respectively (Theorem \ref{one}). It means that Grothendieck period conjectures imply very general Ro\u{\i}tman theorems over $K=\bar \Q$ at least.

However, unconditionally, for $j=1$, $\LA{1}(X)=[L_1\by{u_1} G_1]$ is the Albanese 1-motive \cite[Cor. 12.6.6]{BVK},  $\LA{1}^c(X)=[L_1^c\by{u_1^c} G_1^c]$ is the Borel-Moore Albanese \cite[\S 8.5 \& \S 12.11]{BVK} and $\LA{1}^*(X)=[L_1^*\by{u_1^*} G_1^*]\cong \Alb^+(X)$, which is the cohomological Albanese in \cite[\S 3.1]{BVS} (the isomorphism follows from \cite[Thm. 12.12.6]{BVK}): the geometry of these 1-motives is very well understood and it fully describe the free part of $H_{1,0}^\varpi (X)$, $H_{1,0}^{\varpi,c} (X)$ and $H^{2d-1,d}_\varpi (X)$ as $\ker u_1$, $\ker u^c_1$ and $\ker u^*_1$ over $K=\bar \Q$, respectively (see Lemma \ref{11}). In particular, it follows that $H^{2d-1}(X_\an, \Z_\an (d))_\fr\cap H^{2d-1}_\dr(X)=0$ if $X$ is proper and $H_1(X_\an, \Z)_\fr\cap H_1^\dr(X)=0$ if $X$ is  normal. 

Finally, the previous computations raise the following general question which make sense for any $X$ over an algebraically closed field $K=\bar K$. (The assumption that $K$ is a subfield of the complex numbers is not needed in the following). Let $H_0^{Sus}(X)_0$, $CH_0(X)_0$ and $H^{2d}_\eh(X, \Z(d))_0$ be the kernels of the surjections as in Lemma \ref{cohreproj} (see also in Theorem \ref{zero} and Remark \ref{zerork}). 
Let $\LA{1}(X)(K) = \coker u_1$, $\LA{1}^c(X)(K) = \coker u_1^c$ and  $\LA{1}^*(X)(K) = \coker u_1^*$ be the groups given by taking the $K$-points of the 1-motives $\LA{1}(X)$, $\LA{1}^c(X)$ and $\LA{1}^*(X)$, respectively. For any algebraic $K$-scheme $X$ we have canonical generalised Albanese mappings (see  Proposition \ref{highalb}): the plain Albanese on Suslin homology
$$a\ell b_0:H_0^{Sus}(X)_0\to \LA{1}(X)(K),$$
the Borel-Moore Albanese map
$$a\ell b_0^c:CH_0(X)_0\to \LA{1}^c(X)(K)$$
and the cohomological Albanese map
$$a\ell b_0^*:H^{2d}_\cdh(X, \Z(d))_0\to \LA{1}^*(X)(K)$$
which are always surjections on torsion subgroups (by \cite[Chap. 13 ]{BVK} as in Theorem \ref{one}). We may actually ask wether these Albanese mappings $a\ell b_{0}$, $a\ell b^c_{0}$ and $a\ell b_{0}^*$ would be surjections with a uniquely divisible kernel providing suitable definitive versions of Ro\u{\i}tman theorems (to be also compared with \cite{Kho} and \cite{Gei}).

Recall that we have a surjection from Suslin homology $H_0^{Sus}(X)$ to $CH_0(X)$ (\cite[Ex. 17.3(b)]{VL}) and that $H_0^{Sus}(X)_\tors \cong\LA{1}(X)(K)_\tors$ if $X$ is normal by \cite[Cor. 13.5.3]{BVK} and \cite[Thm. 1.1]{Gei} where $\LA{1}(X)\cong \Alb^-(X)$ is the Serre-Albanese (by \cite[Prop. 12.7.2]{BVK}), the homological Albanese semiabelian variety in \cite[\S 5.1]{BVS}. In particular,  $a\ell b^c_0$ is an isomoprphism on torsion for $X$ normal and proper. Moreover, there is a morphism from Levine-Weibel cohomological Chow group $CH^d_{LW}(X)$ to $H^{2d}_\cdh(X,\Z(d))$, which is a surjection with divisible kernel at least when $X$ is projective, and $CH^d_{LW}(X)_\tors \cong \LA{1}^*(X)(K)_\tors$, see \cite[Thm. 1.7]{BKri}. Actually, in this case we have that $\LA{1}^*(X)(K)\cong \Alb^+(X)$ is the group of $K$-points of a semiabelian variety and a map from $CH^d_{LW}(X)_0$ to $\Alb^+(X)$
has been previously constructed in \cite[Thm. 6.4.1]{BVK} (showing that it is the universal map to semi-abelian varieties). 

\section{Homological regulators and periods}
\subsection{Motivic complexes}
Fix a base field $K$ of zero characteristic. We are mostly interested to work over $K = \bar \Q$ the filed of algebraic numbers but going to just assume that $K=\bar K$ is algebraically closed if suitable.

We refer to \cite{VL}, \cite{AICM}, \cite{BVK} and \cite{BVA} for constructions and properties of the triangulated categories of motivic complexes over the field $K$.   

Let $\DM_\tau^\eff$ be the triangulated category of Voevodsky effective motivic complexes of $\tau$-sheaves over $K$ for $\tau = \Nis$ or $\et$, the Nisnevich or \'etale topology, respectively (\eg see \cite{AICM}, \cite[Lect. 9 \& 14]{VL} and \cf \cite[\S 2.2.6]{BVA}). We here shall denote $\Z (q)$ the usual motivic complex for Nisnevich, \'etale topology or its $\eh$ sheafification after forgetting transfers, as explained in \cite[\S 10.2]{BVK}. Similarly, for any $X$ algebraic $K$-scheme, we set $M (X)$ for the Nisnevich or the \'etale effective motive (with or without transfers) and $M^c (X)$ for the Nisnevich or \'etale effective motive with compact support (see \cite[\S 2.2.6]{BVA} and \cf \cite[Def. 16.13]{VL} and \cite[Def. 1.8.6 \& \S 8.1]{BVK}). We have a canonical map $M (X)\to M^c (X)$ which is a quasi-isomorphism when $X$ is proper. Actually, all these motives with respect to the Nisnevich or the \'etale topology correspond each other under 
\begin{equation}\label{alpha}
    \alpha : \DM_\Nis^\eff\to \DM_\et^\eff
\end{equation}
the change of topology triangulated monoidal functor and the change of topology $\alpha$ is an equivalence with rational coefficients (see \cite[Remark 14.3 \& Thm. 14.30]{VL} where the proof for bounded below applies here as well) after noting that $\DM_\tau^\eff$ is generated by $M (X)$ for $X$ smooth. Recall that considering the triangulated category of (effective) motivic complexes without transfers $\DA^\eff_\et$  we have a canonical functor 
$$\upsilon : \DM^\eff_\et\to \DA^\eff_\et$$
induced by forgetting transfers, \ie by restriction of additive presheaves on finite correspondences to smooth schemes (see \cite{AICM} and \cite[\S 2.3.2]{AHop}); moreover, by adding transfers we get an adjoint of $\upsilon$ inducing an equivalence with rational coefficients and together with $\alpha$ we obtain equivalences
$$\DM_{\Nis ,\Q}^\eff\cong \DM_{\et,\Q}^\eff\cong \DA_{\et,\Q}^\eff$$ 
of tensor triangulated categories (see \cite[Cor. B.14]{AHop} and \cite[Thm. 14.30]{VL}). By inverting the Tate twist $M(1)$ we get $\DM_\tau$ where, for any object $M\in \DM^\eff_\tau$ we here denote $M(q)\df M\otimes \Z(q)$ (see \cite[Lect. 14]{VL}). Recall that by Voevodsky cancellation theorem (see \cite[Thm. 16.24]{VL}) we have that the functor $M\leadsto M(1)$ from $\DM_{\tau}^\eff$ to $\DM_{\tau}$ is fully faithful. 

Similarly, we obtain $\DA_{\et}$ (see \cite{AICM}). Actually, $\DM_{\tau}$ (and $\DA_{\et}$)  can be better obtained directly as the Verdier localization of the derived category of $L$-spectra of $\tau$-sheaves with (and without) transfers over smooth $K$-schemes, where $L$ is a Lefschetz motive, as Ayoub explained in \cite[\S 2.3 \& 4.1]{AICM} and \cite[\S 2.1.1]{AHop}. Following Ayoub, there are triangulated and monoidal (infinite suspension) functors
$$\DA_{\et}^\eff\to \DA_{\et}\hspace*{1cm} \DM_{\tau}^\eff\to \DM_{\tau}$$
both provided with a right adjoint (see \cite[\S 2.1 \& 2.3]{AHop}). The previous equivalences extend to the stable version
$$\DM_{\Nis ,\Q}\cong \DM_{\et,\Q}\cong \DA_{\et,\Q}$$ 
and we shall make reference to all these categories as categories of motivic complexes. For the sake of notation we shall indicate the motivic complexes $M (X)$ and $M^c (X)$ regarded as objects of $\DM_{\tau}$ or $\DA_\et$ as well.\footnote{$M (X):=\Sigma^\infty_{L_\tr}\Z_\tr (X)\in \DM_{\et}$ with notation in \cite{AICM}}

\subsection{Motivic homology}
For any $M\in\DM^\eff_{\Nis}$ we then have canonical maps induced by the change of topology functor
$$H^{p,q}_{\Nis} (M)\df \Hom_{\DM^\eff_{\Nis}} (M , \Z (q)[p]) \longby{\nu^{p,q}} H^{p,q} (M)\df \Hom_{\DM^\eff_{\et}} (\alpha M , \Z (q)[p])$$
In particular, if $X$ is an algebraic $K$-scheme, for $M= M(X)$ we get canonical maps from motivic cohomology  $H^{p,q}_{\Nis} (X)$ to \'etale motivic cohomology $H^{p,q} (X)$. Further, we have $H^{p,q} (X)\cong  H^{p}_{\eh} (X, \Z (q))$ and $H^{p}_{\eh} (X, \Z (q))\cong H^{p}_{\et} (X, \Z (q))$ if $X$ is smooth (see \cite[Prop. 10.2.3]{BVK}). 
Similarly, for any $M\in \DM_{\Nis}$  we have
$$H_{j,i}^{\Nis} (M)\df \Hom_{\DM_{\Nis}} (\Z (i)[j], M) \longby{\nu_{j,i}} H_{j,i} (M)\df \Hom_{\DM_{\et}} (\Z (i)[j],\alpha M)$$
and for $M = M^c(X)$ we get the following mapping
$$H_{j,i}^{BM} (X)\df \Hom_{\DM^\eff_{\Nis}} (\Z (i)[j],M^c (X)) \longby{\nu_{j,i}} H_{j,i}^c (X)\df \Hom_{\DM^\eff_{\et}} (\Z (i)[j], M^c (X))$$
from the usual Borel-Moore motivic homology (see \cite[Def. 16.20]{VL}) to the \'etale one. For $M = M(X)$ we also get the mapping
$$H_{j}^{Sus} (X)\df \Hom_{\DM^\eff_{\Nis}} (\Z [j],M (X)) \longby{\nu_{j}} H_{j} (X)\df \Hom_{\DM^\eff_{\et}} (\Z [j], M (X))$$ 
from Suslin homology to \'etale motivic homology. Recall that for any equidimensional scheme $X$ of dimension $d$ and $0\leq i\leq d$ we have a natural isomorphism $$H_{j,i}^{BM} (X)\cong CH_i (X,j-2i)$$ where $CH_{i}(X, j-2i) = CH^{d-i}(X, j-2i)$ is Bloch's higher Chow group  (see \cite[Prop. 19.18]{VL}). We therefore have a canonical map
\begin{equation}\label{cycle}
    c\ell_{j, i} : CH_{i}(X, j-2i)\to H_{j,i}^c (X)
\end{equation}
induced by the change of topology. Recall that for a closed subscheme $Y$ of a scheme $X$ we have a canonical triangle $M^c(Y)\to M^c(X) \to M^c(X-Y)\to M^c(Y)[1]$
yielding a long exact sequence 
$$H_{j,i}^c (Y)\to H_{j,i}^c (X)\to H_{j,i}^c (X-Y)\to H_{j-1,i}^c (Y)$$
which is compatible with $c\ell_{j, i}$. Note that $c\ell_{j, i}\otimes\Q$ is an isomorphism for any $X$ algebraic $K$-scheme; we have $H_{j,i}^{BM} (X)_\Q\cong H_{j,i}^c (X)_\Q$ as well as $H^{p,q}_{\Nis} (X)_\Q\cong H^{p,q}(X)_\Q$ with rational coefficients. For $X$ proper $H_{j,0}^{BM}(X)= H_{j}^{Sus}(X)$ and $H_{j,0}^c (X)= H_{j} (X)$.

\subsection{de Rham homological regulator}\label{deRhamsect}
Consider $\uOm \in \DM^\eff_\tau$ (resp.\/ $\uOm \in \DA^\eff_\et$) which is given by the complex of presheaves on finite correspondences (resp.\/ on the category smooth $K$-schemes) that associates to $X$ smooth over $K$ the global section $\Gamma (X, \Omega^\d_{X/K})$ of the usual algebraic de Rham complex (see \cite[\S 1.1]{ABVB}, \cite[\S 2.1]{LW} and \cite[\S 2.3]{AHop}). Recall that we have a canonical de Rham regulator $r^q : \Z (q) \to  \uOm$ for all $q\geq 0$.  This is given by the augmentation $r^0\df \epsilon : \Z (0) =\Z \to \uOm$ for $q=0$ and the  dilogarithm $r^1 \df d\log: \Z (1) = \G_m[-1]  \to \uOm$ for $q=1$. Now  $r^q$  for $q\geq 2$ is obtained from the fact that $\uOm$ is an algebra, \ie  we have a map $\uOm \otimes \uOm \to \uOm$ given by the wedge product of differential forms.  As $\Z(q)\cong \Z (1)^{\otimes q}$ for $q\geq 2$ we can define  $r^q$ as the composition of $d\log^{\otimes q} : \Z (1)^{\otimes q} \to \uOm^{\otimes q}$ and $\uOm^{\otimes q}\to \uOm$ (see \cite[(2.1.5)]{LWdr}). 
Recall  that $\uOm \in \DM^\eff_\et$ represents de Rham cohomology. Setting
$$H^{p}_{\dr}(M) \df \Hom_{\DM^\eff_\et} (M, \uOm [p])$$ 
for $M\in \DM^\eff_\et$, for any algebraic scheme $X$ and $M=M(X)\in \DM_\et^\eff$ we have  (see \cite{ABVB})
$$H^p_\dr(X)\df \Hom_{\DM_\et^\eff} (M(X), \uOm [p]) \cong H^p_\eh(X, \uOm).$$
Recall that $\ihom_\et (\Z (q), \uOm) \cong \uOm (-q)\longby{\qi} \uOm $ for $q\geq 0$ as one can see directly from the case $q=1$  (the internal Hom is denoted $\underline{RHom}$ in \cite[Lect. 14]{VL}). This implies that $H^{p}_{\dr}(M(q))\cong H^{p}_{\dr}(M)$ for any non negative twist $q\geq 0$. Moreover, following \cite[\S 2.3.1]{AHop}, consider the stable version $\bOm =\{\uOm [2n]\}_{n\in \N}$ of the de Rham complex such that $\bOm (q)\longby{\qi} \bOm$ in $\DM_\tau$ (resp.\/ in $\DA_\et$), for any integer $q\in \Z$. 
For $M\in \DM_\et$ we set 
$$H_{j}^{\dr}(M) \df \Hom_{\DM_\et} (\Z[j], M\otimes \bOm)$$  for all $j\geq 0$. For $M= M^c(X)$ we here set
$$H_{j}^{\dr, c}(X) \df \Hom_{\DM_\et} (\Z[j], M^c(X)\otimes \bOm)$$ 
for  de Rham homology with compact support. Note that we have
$$\Hom_{\DM_\et} (M(X), \bOm [p])\cong \Hom_{\DM_\et^\eff} (M(X), \uOm [p])=H^p_\dr(X)$$
by the adjunction between effective and non effective (see \cite{AICM} and \cite[\S 2.3.2]{AHop}).\footnote{As explained in \cite[Rmk. 2.9]{AICM}, ${\rm Ev}_0 : \cE \mapsto \cE_0$ is right adjoint to ${\rm Sus}^0_{L_\tr}:=\Sigma^\infty_{L_\tr}$ in such a way that ${\rm Ev}_0\bOm = \uOm$ and by our convention $M (X): =\Sigma^\infty_{L_\tr}\Z_\tr (X) \in \DM_\et$}
Moreover, we get an isomorphism
$$ \tau^\dr_{j, i}: \Hom_{\DM_{\et}} (\Z (i)[j], M\otimes \bOm )  \longby{\simeq} \Hom_{\DM_{\et}}  (\Z [j], M\otimes \bOm )$$ 
that is $H_{j}^{\dr}(M (i)) \cong H_{j}^{\dr}(M)$, for all $i$ and $j$. Note that the augmentation $\epsilon : \Z \to \bOm$ yields a canonical map $$\epsilon_M : M\to M\otimes \bOm$$
obtained as $M\otimes  \epsilon$, yielding the following homological  {\it de Rham regulator} mapping.
\begin{lemma} \label{regderham}
For any $M\in \DM_\et$ we have a 
$r_{j, i}^\dr : H_{j, i}(M)\to H_{j}^{\dr}(M)$ for all $i, j\geq 0$. In particular, we get $H_{j,i}^{BM} (X)\to H_{j}^{\dr, c}(X) $ for any scheme $X$ over $K$.  
\end{lemma}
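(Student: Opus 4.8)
The plan is to construct the regulator $r_{j,i}^\dr$ directly from the map $\epsilon_M$ together with the twist-invariance isomorphisms already recorded before the statement. Recall that $H_{j,i}(M)$ is by definition $\Hom_{\DM_\et}(\Z(i)[j], M)$, and that $\epsilon_M : M \to M \otimes \bOm$ is induced by the augmentation $\epsilon : \Z \to \bOm$. So the first step is simply to apply the functor $\Hom_{\DM_\et}(\Z(i)[j], -)$ to $\epsilon_M$, giving a homomorphism
\[
\Hom_{\DM_\et}(\Z(i)[j], M) \longby{(\epsilon_M)_*} \Hom_{\DM_\et}(\Z(i)[j], M \otimes \bOm).
\]
The target of this map is exactly $H_j^\dr(M(i))$ in the notation of the excerpt (by $\tau$-twist-invariance of $\bOm$, $M \otimes \bOm$ "is" $M(i) \otimes \bOm$ up to the canonical isomorphism, or one simply reads off the definition with the twisted source).

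The second step is to post-compose with the isomorphism $\tau^\dr_{j,i} : \Hom_{\DM_\et}(\Z(i)[j], M \otimes \bOm) \longby{\simeq} \Hom_{\DM_\et}(\Z[j], M \otimes \bOm) = H_j^\dr(M)$ established just above the Lemma (this is where the hypothesis $\car K = 0$, entering through $\bOm(q) \by{\qi} \bOm$, is used). Composing, we set
\[
r_{j,i}^\dr \df \tau^\dr_{j,i} \circ (\epsilon_M)_* : H_{j,i}(M) \longrightarrow H_j^\dr(M),
\]
which is visibly functorial in $M$ and additive. For the second assertion, specialise to $M = M^c(X) \in \DM_\et$: then $H_{j,i}(M^c(X))$ receives the change-of-topology map from $H_{j,i}^{BM}(X) = \Hom_{\DM^\eff_\Nis}(\Z(i)[j], M^c(X))$, and $H_j^\dr(M^c(X))$ is by definition $H_j^{\dr,c}(X)$; composing $r_{j,i}^\dr$ with $\nu_{j,i}$ yields the stated map $H_{j,i}^{BM}(X) \to H_j^{\dr,c}(X)$.

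There is no real obstacle here; the content of the lemma is entirely in the constructions (of $\uOm$, $\bOm$, the regulator $r^q : \Z(q) \to \uOm$, and the twist-invariance $\tau^\dr_{j,i}$) that precede it. The only point deserving a word of care is compatibility of the two possible routes — applying $\epsilon_M$ on the $M$-side versus viewing $\epsilon$ as the de Rham regulator $r^0$ and twisting — but these agree because $\bOm$ is an algebra and $r^q$ was defined compatibly with the algebra structure and with the twist isomorphisms $\ihom_\et(\Z(q),\uOm) \cong \uOm(-q) \by{\qi} \uOm$. So I would state the proof in essentially one line: \emph{apply $\Hom_{\DM_\et}(\Z(i)[j], -)$ to $\epsilon_M$ and compose with $\tau^\dr_{j,i}$; for the last clause, precompose with the change-of-topology map $\nu_{j,i}$ and use $M^c(X) \otimes \bOm$ to compute $H_j^{\dr,c}(X)$.} \eprooff
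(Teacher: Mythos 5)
Your construction is correct and follows exactly the paper's own argument: compose a class $\Z(i)[j]\to M$ with $\epsilon_M$ and then apply $\tau^\dr_{j,i}$, and for the Borel--Moore statement precompose with the change-of-topology map $\nu_{j,i}$ for $M=M^c(X)$. The extra remark on compatibility with the twist isomorphisms is harmless but not needed.
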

\begin{proof}  
Given a map $\Z (i)[j]\to M$ its composition with $\epsilon_M$ yields a map $\Z (i)[j]\to M\otimes \bOm$ and we then make use of $ \tau^\dr_{j, i}$ above. For $M = M^c(X)$, composing with $\nu_{j, i}$ we further get the regulator from Borel-Moore motivic homology. 
\end{proof}
For $X$ equidimensional $d = \dim (X)$ and $0\leq i\leq d$ we get
$$c\ell_{j, i}^\dr : CH_{i}(X, j-2i)\to H_{j}^{\dr, c} (X)$$ 
\subsection{Homological period regulator}\label{hompersect}
Consider an embedding $\sigma: K \into \C$ and the corresponding stable version of the Betti realization (see \cite[Def. 2.4]{AHop} and \cite[Def. 2.1]{A})
$$
\beta_\sigma : \DM_\et\to D(\Z)
$$
which is a triangulated functor in the derived category of abelian groups $D(\Z)$ such that $\beta_\sigma (\Z) =\Z$ and  $\beta_\sigma (\Z (q)) \cong \Z_\an(q)\df (2\pi i)^q\Z$, $q\geq 1$. Recall that the functor $\beta_\sigma$ admits a right adjoint $\beta^\sigma : D(\Z) \to \DM_\et$ (see \cite[Def 2.4]{AHop} and \cite[Def. 1.7]{AHop2}).\footnote{The functors $\beta_\sigma$ and $\beta^\sigma$ are respectively denoted ${\rm Btti}^*$ and ${\rm Btti}_*$ on $\DA_\et$ in \cite{AHop}} For $M\in \DM_\et$ we set
$$H^{p,q}_\an (M) \df \Hom_{\DM_\et}(M, \beta^\sigma (\Z_\an (q))[p])\cong  \Hom_{D (\Z) }(\beta_\sigma (M),  \Z_\an (q)[p])$$
Moreover, for $M\in \DM_\et$,  we denote
$$H_{j,i}^\an (M) \df \Hom_{\DM_\et}(\Z (i)[j], M\otimes \beta^\sigma (\Z))$$
Denote $r_\sigma^M: M \to \beta^\sigma\beta_\sigma (M)$ the unit of the adjunction. Denote $$\rho_M : M \to M\otimes \beta^\sigma(\Z)$$
the map $M\otimes r_\sigma^\Z$ induced by the the unit. 
This map $\rho_M $ yields a homological  {\it Betti regulator} mapping
 $$
r_{j,i}^{\an}:H_{j,i} (M)\to H_{j,i}^\an (M).
 $$
which is just given by composition.  In particular, for $M= M^c(X)$ we obtain a mapping from motivic \'etale homology as shown  
$$H_{j,i}^c (X)\to H^{\an, c}_{j, i} (X)\df \Hom_{\DM_\et}(\Z (i)[j], M^c(X)\otimes \beta^\sigma (\Z))$$ and via $\nu_{j, i}$ a mapping from Borel-Moore motivic homology. For any equidimensional scheme $X$ of dimension $d$, $0\leq i\leq d$, we then get a cycle map
$$ c\ell_{j,i}^\an:CH_i (X,j-2i) \to H^{\an,c}_{j, i} (X)$$
In fact, for such an $M= M^c(X)$ we have $H_{j,i}^{BM} (X)\cong CH_i (X,j-2i)$ and the composition of the regulator 
$r_{j,i}^{\an}$ with the change of topology map $\nu_{j, i}$ displayed above induces the claimed cycle map.\begin{lemma} \label{compareBM}
For a field homomorphism $\sigma : K \into \C$ and any scheme $X$ over $K$ we let $X_\an$ be the analytic space of $\C$-points. There are canonical isomorphisms
$$H^{\an}_{j, i} (X)_\Q\cong H_j(X_\an, \Q_\an)\ \ \text{and}\ \ H^{\an,c}_{j, i} (X)_\Q\cong H_j^{BM}(X_\an, \Q_\an)$$
with the usual singular and  Borel-Moore homology of the analytic space.
\end{lemma}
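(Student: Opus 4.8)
The plan is to rewrite both groups purely in terms of the Betti realization $\beta_\sigma$, using the monoidal adjunction $(\beta_\sigma,\beta^\sigma)$ together with the fact that geometric motives are dualizable with rational coefficients, and then to quote Ayoub's identification of $\beta_\sigma M(X)$ with the singular chains of $X_\an$.

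First I would dispose of the twist. Since $\beta_\sigma$ is a tensor triangulated functor with $\beta_\sigma(\Z(1))\cong\Z_\an(1)$ free of rank one, the projection formula for $(\beta_\sigma,\beta^\sigma)$ applied to the invertible object $\Z(-i)$ gives $\beta^\sigma(\Z)\otimes\Z(-i)\cong\beta^\sigma(\Z\otimes\beta_\sigma(\Z(-i)))\cong\beta^\sigma(\Z)$; moving the twist through the invertible object $\Z(i)$ then yields, for $M=M(X)$ or $M=M^c(X)$ and every $i$,
\[\Hom_{\DM_\et}(\Z(i)[j],\,M\otimes\beta^\sigma(\Z))\;\cong\;\Hom_{\DM_\et}(\Z[j],\,M\otimes\beta^\sigma(\Z)).\]
Now pass to $\Q$-coefficients. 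As $M(X)$ and $M^c(X)$ are geometric motives they are dualizable in $\DM_{\et,\Q}$ (rigidity of geometric motives with rational coefficients), so the projection formula again gives $M\otimes\beta^\sigma(\Z)\cong\beta^\sigma(\beta_\sigma M)$ rationally; combining this with the adjunction between $\beta_\sigma$ and $\beta^\sigma$ and with $\beta_\sigma(\Z[j])\cong\Q[j]$ I obtain
\[H^\an_{j,i}(M)_\Q\;\cong\;\Hom_{\DM_{\et,\Q}}(\Z[j],\,\beta^\sigma(\beta_\sigma M))\;\cong\;\Hom_{D(\Q)}(\Q[j],\,\beta_\sigma M).\]

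It remains to compute $\beta_\sigma M(X)$ and $\beta_\sigma M^c(X)$. For $M=M(X)$, Ayoub's construction of the Betti realization identifies $\beta_\sigma M(X)$ with the complex of rational singular chains of the analytic space $X_\an$ (see \cite{A} and \cite{AHop}), whence $\Hom_{D(\Q)}(\Q[j],\beta_\sigma M(X))\cong H_j(X_\an,\Q_\an)$. For $M=M^c(X)$, I would choose a compactification $X\hookrightarrow\bar{X}$ (Nagata) with closed complement $Z$; since $\bar{X}$ and $Z$ are proper we have $M(\bar{X})\simeq M^c(\bar{X})$ and $M(Z)\simeq M^c(Z)$, so the localization triangle becomes $M(Z)\to M(\bar{X})\to M^c(X)\to M(Z)[1]$. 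Applying the triangulated functor $\beta_\sigma$ identifies $\beta_\sigma M^c(X)$ with $\cone\bigl(C_*(Z_\an)\to C_*(\bar{X}_\an)\bigr)\simeq C_*(\bar{X}_\an,Z_\an)\simeq C^{BM}_*(X_\an)$, the last being the standard presentation of Borel--Moore homology via a compactification; hence $H^{\an,c}_{j,i}(X)_\Q\cong H_j^{BM}(X_\an,\Q_\an)$. Naturality of the unit $r_\sigma^\Z$, of the projection-formula isomorphisms, and of Ayoub's comparison makes all of these isomorphisms canonical and compatible with the localization sequences.

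The only substantial input is Ayoub's theorem that $\beta_\sigma$ is a tensor triangulated functor sending $M(X)$ to the singular chains of $X_\an$ for every algebraic $K$-scheme; once this is granted, the Borel--Moore statement costs nothing extra, since it uses only the localization triangle for $M^c$ together with $M^c\simeq M$ for proper schemes. I expect the main obstacle to be precisely locating and citing that comparison in the required generality (arbitrary, possibly singular, $X$, with the compatibility with $f_!$/localization that one uses for $M^c$); the remainder is the formal adjunction bookkeeping carried out above.
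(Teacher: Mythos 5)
Your argument is correct and takes essentially the same route as the paper: reduce the Borel--Moore case to a compactification pair via the localization triangle together with $M\simeq M^c$ for proper schemes and the identification of $H^{BM}_j(X_\an)$ with relative homology, then identify $M(+)\otimes\beta^\sigma(\Q)$ with $\beta^\sigma\beta_\sigma M(+)$ so that adjunction and Ayoub's identification of $\beta_\sigma M(+)$ with the singular chain complex of $+_\an$ conclude. The only cosmetic differences are that you derive the comparison $M\otimes\beta^\sigma(\Q)\simeq\beta^\sigma\beta_\sigma(M)$ from rational dualizability of geometric motives plus the projection formula where the paper simply cites Ayoub (\cite[Prop.\ 2.7]{AHop}), and that you untwist $\Z(i)$ explicitly at the start rather than absorbing $\Z_\an(i)$ at the end.
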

\begin{proof} Let $\bar X$ be a proper scheme such that $X = \bar X- Y$ for $Y\subset \bar X$ a closed subscheme. We have that $M(\bar X) = M^c(\bar X)$ and $M (Y) =M^c(Y)$ and $M^c (X)$ is the cone of $M (Y)\to M (X)$. We also have that $H_j^{BM}(X_\an, \Z_\an(i))\cong H_j(\bar X_\an, Y_\an; \Z_\an(i))$.   There are  induced maps $M (+ )\otimes \beta^\sigma (\Z)\to  M^c ( + )\otimes \beta^\sigma (\Z)$ and by the canonical triangle of the pair we are then left to show that the complex $M( + )\otimes \beta^\sigma (\Q)$ in $\DM_{\et, \Q}\cong \DA_{\et, \Q}$ is representing the singular chain complex of $ + _\an$, which in fact follows directly from Ayoub's construction. In fact, recall that we have a quasi-isomorphism (see \cite[Prop. 2.7]{AHop}) 
$$\beta^\sigma\beta_\sigma (M ( + ))\longby{\qi}  M ( + ) \otimes \beta^\sigma (\Q)$$
in $\DA_{\et, \Q}$. Now $\beta_\sigma M (+)$ is the singular chain complex of $ + _\an$ so that 
$$ \Hom_{\DM_{\et, \Q}}(\Z (i)[j], M (+ )\otimes \beta^\sigma (\Q))\cong \Hom_{D (\Z )}(\Z_\an (i)[j], \beta_\sigma M (+))_\Q\cong H_j (X_\an, \Q_\an)$$
We then get the claimed isomorphism.  
\end{proof}
Actually, we get: 
\begin{lemma}[\protect{Ayoub}] \label{APLemma} For $M\in \DM_\et$
there is a canonical quasi-isomorphism 
$$\varpi^{M}: M\otimes \beta^{\sigma} (\Z)\otimes_\Z \C \longby{\qi}M\otimes \bOm \otimes_K \C$$
whose composition with $\rho_M$ is induced by the augmentation $\epsilon_M : M\to M\otimes\bOm $ after tensoring with $\C$, \ie the following diagram
\[\xymatrix{M\ar[d]_{\epsilon_M} \ar[r]_{\rho_M}& M\otimes \beta^{\sigma} (\Z)\ar[r]&M\otimes \beta^{\sigma} (\Z)\otimes_\Z \C\ar[dl]^{\varpi^{M}}\\
M\otimes \bOm \ar[r]^{}&M\otimes \bOm \otimes_K \C&
 }\]
 commutes. 
\end{lemma}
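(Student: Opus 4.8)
The plan is to reduce everything to the unit object and then invoke Ayoub's period comparison between the two realisation objects. First I would note that the two vertical maps are both obtained by tensoring a map of the unit with $M$: one has $\rho_M = M\otimes r_\sigma^\Z$, where $r_\sigma^\Z\colon \un = \Z\to\beta^\sigma\beta_\sigma(\Z) = \beta^\sigma(\Z)$ is the unit of the adjunction (so $\rho_\un = r_\sigma^\Z$), and $\epsilon_M = M\otimes\epsilon$, where $\epsilon\colon\Z\to\bOm$ is the augmentation (so $\epsilon_\un = \epsilon$). Since $\otimes_\Z\C$ and $\otimes_K\C$ commute with $M\otimes(-)$, it suffices to produce a single natural quasi-isomorphism $\varpi\df\varpi^\un\colon\beta^\sigma(\Z)\otimes_\Z\C\to\bOm\otimes_K\C$ in $\DM_\et$ together with the identity $\varpi\circ(r_\sigma^\Z\otimes\C) = \epsilon\otimes\C$, and then to set $\varpi^M\df M\otimes\varpi$; the square for a general $M$ is then $M\otimes(-)$ applied to the square for $\un$.

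For the comparison itself I would work after $\otimes\Q$, hence in $\DA_{\et,\Q}\cong\DM_{\et,\Q}$ via the equivalences recalled above, where Ayoub's formalism applies. The object $\beta^\sigma(\Z)\otimes_\Z\C$ is the $\C$-coefficient Betti realisation object — by Lemma \ref{compareBM}, on $M(X)$ it computes singular homology $H_j(X_\an,\C)$ for $X$ smooth — while $\bOm\otimes_K\C$ is the $\C$-coefficient de Rham one. Both are commutative monoids in $\DM_\et$: the former because $\beta^\sigma$ is lax monoidal, being right adjoint to the monoidal functor $\beta_\sigma$; the latter because the de Rham complex $\uOm$ (hence $\bOm$) carries the wedge product; and their unit maps are exactly $r_\sigma^\Z$ and $\epsilon$. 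Ayoub's version of the Grothendieck period isomorphism is precisely a quasi-isomorphism $\varpi$ of unital monoids between these two objects — the motivic incarnation of $H^*_\dr(X)\otimes_K\C\cong H^*(X_\an,\C)$, coherent at the level of the representing complexes — and I would invoke it as such.

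Granting that $\varpi$ respects units, the square for $\un$ is then formal: the two composites $\Z\to\bOm\otimes_K\C$ occurring in it are the restrictions along $\Z\to\C$ of the unit maps of the $\C$-algebras $\beta^\sigma(\Z)\otimes_\Z\C$ and $\bOm\otimes_K\C$, and any algebra morphism between the two intertwines their units; applying $M\otimes(-)$ gives the stated diagram. The one genuinely non-formal ingredient, and the main obstacle, is the existence of $\varpi$ as an honest morphism in the triangulated category — natural and monoidal — rather than merely a compatible family of isomorphisms on the cohomology of individual smooth schemes, where it is just Grothendieck's theorem; assembling these into a single coherent monoidal quasi-isomorphism of motivic complexes is the delicate part of Ayoub's period machinery, which I take as given.
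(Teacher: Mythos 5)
Your proposal is correct and follows essentially the same route as the paper: the paper's proof also obtains $\varpi^{M}$ by tensoring with $M$ the unit-level period quasi-isomorphism $\beta^\sigma\beta_\sigma(\Z)\otimes_\Z\C \simeq \bOm\otimes_K\C$, taken from Grothendieck and Ayoub's period machinery, with the compatibility with $\rho_M$ and $\epsilon_M$ coming from that cited comparison. Your extra discussion of the monoidal/unit structure merely makes explicit what the paper delegates to those references.
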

\begin{proof} This $\varpi^{M}$ is obtained by tensoring with $M$ the period isomorphism $\beta^\sigma\beta_\sigma (\Z)\otimes_\Z \C \longby{\qi}\bOm \otimes_K \C$ (see \cite[Thm. $1^\prime$]{GrdR}, \cite[Cor. 2.89 \& Prop. 2.92]{AHop} and also \cite[\S 3.5]{APKZ}). \end{proof} 
For $M\in \DM_\et$, by composition with $\varpi^M$, using $\tau^\dr_{j, i}$, we get a period isomorphism 
$$\varpi_{j,i}^M: H_{j,i}^\an (M)\otimes_\Z \C \longby{\simeq} H^\dr_j(M)\otimes_K\C$$
and a period regulator as follows.
\begin{propose} \label{periodsquare}
For  $M\in \DM^\eff_\et$ along with a fixed embedding $\sigma: K \into \C$ the period isomorphism $\varpi_{j,i}^M$ above induces a commutative diagram
 \[\xymatrix{ H_{j,i}(M) \ar[rr]^{r^\an_{j,i}} \ar[d]_{r_{j,i}^\dr}&& H_{j,i}^\an (M)\ar[d]^{\iota_{j,i}^\an}\\
H^\dr_j(M)\ar[rr]^{\iota_{j,i}^\dr\hspace*{2cm}}&& H^\dr_j(M)\otimes_K \C \cong H_{j,i}^\an (M)\otimes_\Z \C \hspace*{2cm}
 }\]
\end{propose}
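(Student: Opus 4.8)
The plan is to reduce the commutativity of the square entirely to the compatibility triangle of Lemma \ref{APLemma}. The point is that each of the three maps bordering the square is defined by post\-composition with a structural morphism in $\DM_\et$ (after, where needed, extending scalars along $\Z\to\C$ and applying the twist\-killing isomorphism $\tau^\dr_{j,i}$), so the whole square will be obtained by applying the covariant functor $\Hom_{\DM_\et}(\Z(i)[j],-)$ to a diagram that already commutes in $\DM_\et$, and then transporting the result along natural isomorphisms.

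Concretely, I would first unwind the definitions on a class $f\in H_{j,i}(M)=\Hom_{\DM_\et}(\Z(i)[j],M)$. By construction $r^\an_{j,i}(f)=\rho_M\circ f$ and $r^\dr_{j,i}(f)=\tau^\dr_{j,i}(\epsilon_M\circ f)$; the maps $\iota^\an_{j,i}$ and $\iota^\dr_{j,i}$ are the canonical maps of an abelian group into its extension of scalars to $\C$; and the identification $H^\dr_j(M)\otimes_K\C\cong H^\an_{j,i}(M)\otimes_\Z\C$ in the lower right corner is the period isomorphism $\varpi^M_{j,i}$, which is itself post\-composition with the quasi\-isomorphism $\varpi^M$ of Lemma \ref{APLemma} followed by $\tau^\dr_{j,i}$. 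Thus the assertion is the single identity $\iota^\dr_{j,i}\circ r^\dr_{j,i}=\varpi^M_{j,i}\circ\iota^\an_{j,i}\circ r^\an_{j,i}$, and evaluating the right\-hand side on $f$ produces $\tau^\dr_{j,i}(\varpi^M\circ(\rho_M\circ f))$, all composites being taken after base change to $\C$, using that $\Hom_{\DM_\et}(\Z(i)[j],-)$ is compatible with the relevant extension of scalars in the tensor abelian category of homological periods of \S\ref{BdeR} and \cite{ABVB}.

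The key input is then Lemma \ref{APLemma}: its commuting triangle asserts precisely that $\varpi^M$ precomposed with the base change of $\rho_M$ equals the base change of $\epsilon_M$. Hence $\varpi^M\circ(\rho_M\circ f)$ becomes $\epsilon_M\circ f$ as a morphism $\Z(i)[j]\to M\otimes\bOm\otimes_K\C$, and applying $\tau^\dr_{j,i}$ — which is natural in $M$, hence compatible with post\-composition by $\varpi^M$ and with extension of scalars — yields $r^\dr_{j,i}(f)\otimes_K\C=\iota^\dr_{j,i}(r^\dr_{j,i}(f))$, the desired equality. The only points that genuinely need checking are formal: that $\tau^\dr_{j,i}$ is natural with respect to the morphism $M\otimes\beta^\sigma(\Z)\otimes_\Z\C\to M\otimes\bOm\otimes_K\C$, and that post\-composition with the quasi\-isomorphism $\varpi^M$ induces a well\-defined isomorphism on the $\C$\-linearised Hom groups. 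I do not expect a real obstacle here: all the arithmetic content — the comparison of Betti and de Rham structures through periods — is already packaged in Lemma \ref{APLemma}, and what remains is bookkeeping about extension of scalars and the trivialisation of the Tate twist on $\bOm$.
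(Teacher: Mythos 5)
Your proposal is correct and follows essentially the same route as the paper: the paper also reduces the square to the compatibility triangle of Lemma \ref{APLemma} by applying $\Hom_{\DM_\et}(\Z(i)[j],-)$, extending scalars to $\C$, and then transporting along $\tau^\dr_{j,i}$, which is exactly your element-wise computation written as a stacked diagram of Hom groups. The formal points you flag (naturality of $\tau^\dr_{j,i}$ and compatibility of postcomposition with $\varpi^M$ under $\C$-linearisation) are precisely what the paper's diagram encodes, so no gap remains.
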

\begin{proof} This easily follows from Lemma \ref{APLemma}. In fact, by construction, the claimed commutative diagram can be obtained from the following commutative diagram:
\[\xymatrix{\Hom_\DM (\Z(i)[j], M) \ar[d]_{\epsilon_M\circ -}\ar[rr]^-{\iota^{p,q}_\an\circ\, r_\an^{p,q}} & & \ar[d]_{\varpi^M\circ -}\Hom_{\DM}( \Z(i)[j],M\otimes \beta^\sigma (\Z))_\C\\
\Hom_\DM (\Z (i) [j], M\otimes \bOm )\ar[d]_{\tau^\dr_{j, i}}\ar[rr]_-{} & & \Hom_{\DM}(\Z (i) [j], M\otimes \bOm )_\C\ar[d]_{\tau^\dr_{j, i}}\\ 
\Hom_\DM (\Z [j], M\otimes \bOm )\ar[rr]_-{\iota_{j,i}^\dr} & & \Hom_{\DM}(\Z [j], M\otimes \bOm )_\C} \]
\end{proof}
We set $H_{j,i}^\varpi(M):= H_{j,i}^\an (M)\cap H^\dr_j(M)$ for the inverse image of $H^\dr_j(M)$ inside $H_{j,i}^\an (M)$ under $\iota_{j,i}^\an$ and the period isomorphism. In particular, for $M = M(X)$ or $M^c(X)$ we set 
$$H_{j,i}^\varpi(X) := H_{j,i}^\an (X)\cap H^\dr_j(X)\ \ \text{and}\ \ H_{j,i}^{\varpi,c}(X) := H_{j,i}^{\an,c} (X)\cap H^{\dr,c}_j(X)$$ and  from Proposition \ref{periodsquare} we have: 
\begin{cor}\label{periodreg} For $M\in \DM^\eff_\et$ we  then have $ r_{j, i}^\varpi : H_{j,i}(M)\to H_{j,i}^\varpi(M)$. For any $X$ we have $ r_{j, i}^\varpi : H_{j,i}(X)\to H_{j,i}^\varpi(X)$
and if $X$ is equidimensional of dimension $d$, for $0\leq i\leq d$ there is a map
$$ c\ell_{j,i}^\varpi :CH_i (X,j-2i) \to H^{\varpi, c}_{j, i} (X).$$
\end{cor}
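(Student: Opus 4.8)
The plan is to read the statement off directly from the commutativity of Proposition \ref{periodsquare}; the only extra work is bookkeeping between effective and non-effective motives.

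First I would view the effective motive $M$ inside $\DM_\et$ via the fully faithful suspension functor, so that $H_{j,i}(M)$, $H^\an_{j,i}(M)$, $H^\dr_j(M)$, the Betti regulator $r^\an_{j,i}$ of \S\ref{hompersect}, the de Rham regulator $r^\dr_{j,i}$ of Lemma \ref{regderham} and the period isomorphism $\varpi^M_{j,i}$ are all at hand; by monoidality and full faithfulness of the suspension, together with the adjunction identity recorded just before Lemma \ref{regderham}, this passage does not change the groups in question (as is already implicit in Proposition \ref{periodsquare}). Then, for a class $\xi\in H_{j,i}(M)$, Proposition \ref{periodsquare} tells us that $\iota^\an_{j,i}(r^\an_{j,i}(\xi))$, regarded in $H^\dr_j(M)\otimes_K\C\cong H^\an_{j,i}(M)\otimes_\Z\C$, equals $\iota^\dr_{j,i}(r^\dr_{j,i}(\xi))$ and hence lies in the image of the de Rham group $H^\dr_j(M)$. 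Since $H^\varpi_{j,i}(M)$ is by definition the inverse image of that image under $\iota^\an_{j,i}$ and the period isomorphism, this proves $r^\an_{j,i}(\xi)\in H^\varpi_{j,i}(M)$; as $r^\an_{j,i}$ is a homomorphism (composition with a fixed morphism of $\DM_\et$), it corestricts to the asserted $r^\varpi_{j,i}\colon H_{j,i}(M)\to H^\varpi_{j,i}(M)$.

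Next I would specialise to $M=M(X)$ and $M=M^c(X)$ for an algebraic $K$-scheme $X$: under the identifications $H_{j,i}(X)=H_{j,i}(M(X))$, $H^\an_{j,i}(X)=H^\an_{j,i}(M(X))$, $H^\dr_j(X)=H^\dr_j(M(X))$ and their compact-support analogues, the previous step yields $r^\varpi_{j,i}\colon H_{j,i}(X)\to H^\varpi_{j,i}(X)$ and $r^{\varpi,c}_{j,i}\colon H^c_{j,i}(X)\to H^{\varpi,c}_{j,i}(X)$. Finally, for $X$ equidimensional of dimension $d$ and $0\leq i\leq d$, I would compose the isomorphism $H^{BM}_{j,i}(X)\cong CH_i(X,j-2i)$ of \cite[Prop. 19.18]{VL} with the change-of-topology map $\nu_{j,i}\colon H^{BM}_{j,i}(X)\to H^c_{j,i}(X)$ of \S\ref{hompersect} and with $r^{\varpi,c}_{j,i}$, obtaining $c\ell^\varpi_{j,i}\df r^{\varpi,c}_{j,i}\circ\nu_{j,i}$ as a map $CH_i(X,j-2i)\to H^{\varpi,c}_{j,i}(X)$; this is precisely $r^{\varpi,c}_{j,i}\circ c\ell_{j,i}$ for the cycle map \eqref{cycle}.

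I do not expect a genuine obstacle: the mathematical substance is entirely contained in the commutativity of Proposition \ref{periodsquare}. The two points that need a little care are that moving between $\DM^\eff_\et$ and $\DM_\et$ leaves the relevant $\Hom$-groups unchanged, and that the identification $H^{BM}_{j,i}(X)\cong CH_i(X,j-2i)$ used here is the one compatible with the triangulated $\Hom$-description of Proposition \ref{periodsquare}, namely \cite[Prop. 19.18]{VL}; both are already available in the cited references.
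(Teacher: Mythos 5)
Your proposal is correct and follows essentially the same route as the paper, which states the corollary as an immediate consequence of the commutative square of Proposition \ref{periodsquare} (forcing the Betti regulator to corestrict to the inverse image $H^\varpi_{j,i}(M)=H^\an_{j,i}(M)\cap H^\dr_j(M)$), specialised to $M=M(X)$ and $M=M^c(X)$, and then composed with $\nu_{j,i}$ under the identification $H^{BM}_{j,i}(X)\cong CH_i(X,j-2i)$ of \cite[Prop.\ 19.18]{VL}, i.e.\ $c\ell^\varpi_{j,i}=r^{\varpi,c}_{j,i}\circ c\ell_{j,i}$ exactly as you define it. Your extra care about the effective/stable bookkeeping is a reasonable elaboration of what the paper leaves implicit.
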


\subsection{Duality}\label{dualsect}
Recall that the subcategory $\DM_\gm \subset \DM_\Nis^-\subset \DM_\Nis$ of constructible or geometric motives is rigid (see \cite[Thm 20.17]{VL}): it is provided with an internal $\ihom$ and a dual $M^* = \ihom (M, \Z)$ (see \cite[Def. 20.15]{VL}) such that for any $X$ smooth and $\dim (X)\leq d $ we have 
$$M(X)^* =\ihom_\Nis (M (X), \Z (d))(-d)$$
where $\ihom_\Nis$ is the effective partial internal Hom in $\DM_\Nis^\eff$. Thus, by Noetherian induction and the smooth case, for any $X$ of dimension $\leq d$, we have that $M (X)^*(d)= \ihom_\Nis (M (X), \Z (d))\in \DM^\eff_\Nis$ is effective (\cf \cite[Lemma 8.6.1]{BVK}). Finally, recall that if $X$ is smooth and $d = \dim (X)$, the diagonal cycle 
$\Delta_X \in CH^d(X \times_K X)$  induces a canonical quasi-isomorphism (\cf \cite[Proof of Prop. 20.3 \& Ex. 20.11]{VL})
$$\Delta_{\Nis}\colon M^c (X) \longby{\simeq} \ihom_\Nis (M (X), \Z (d))[2d]$$
yielding $M^c (X) \cong M (X)^*(d)[2d]\in \DM^\eff_\Nis$. In general, the Voevodsky motive $M(X)$, that with compact support $M^c(X)$ and the dual motive $M(X)^*(d)[2d]$ are different if $X$ is not smooth and proper.  Under the change of topology monoidal functor $\alpha$ the same holds true in $\DM^\eff_\et$.
\begin{lemma}[\protect{Duality}] \label{dual} For $X$ smooth of dimension $d = \dim (X)$ over $K= \bar K$ we have integrally defined duality isomorphisms which are compatible with the change of topology as shown in the following commutative diagram
\[\xymatrixcolsep{2pc}\xymatrix{ 
 H_{j,i}^{BM} (X)\ar[d]^{||}\ar[r]^{}& H_{j,i}^c (X)\ar[d]^{||}\\ 
H^{2d-j, d-i}_\Nis(X)\ar[r]^{}&  H^{2d-j,d-i} (X)
}\]
\end{lemma}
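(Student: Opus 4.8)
The plan is to obtain both vertical isomorphisms from a single mechanism, run once over the Nisnevich topology and once over the \'etale topology, and then to read off the commutativity of the square from the functoriality of the change of topology functor $\alpha$. The starting point is the duality quasi-isomorphism recalled just above, $\Delta_\Nis\colon M^c(X)\longby{\simeq}\ihom_\Nis(M(X),\Z(d))[2d]$ in $\DM^\eff_\Nis$, together with its \'etale analogue $\Delta_\et\colon \alpha M^c(X)\longby{\simeq}\ihom_\et(\alpha M(X),\Z(d))[2d]$, which holds because $\alpha$ is monoidal, fixes $\Z(d)$, and commutes with $\ihom$ on geometric motives (exactly the input recalled in the paragraph preceding the statement).

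First I would fix $\tau\in\{\Nis,\et\}$ and $0\le i\le d$ and apply $\Hom_{\DM^\eff_\tau}(\Z(i)[j],-)$ to the relevant duality quasi-isomorphism. The $\otimes$--$\ihom_\tau$ adjunction turns $\Hom_{\DM^\eff_\tau}(\Z(i)[j],\ihom_\tau(M(X),\Z(d))[2d])$ into $\Hom_{\DM^\eff_\tau}(M(X)(i),\Z(d)[2d-j])$; then the effective cancellation isomorphism $\ihom_\tau(\Z(i),\Z(d))\cong\Z(d-i)$, available since $0\le i\le d$ (a form of \cite[Thm.~16.24]{VL}), followed by the adjunction once more, rewrites this as $\Hom_{\DM^\eff_\tau}(M(X),\Z(d-i)[2d-j])$. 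For $\tau=\Nis$ the two ends are $H_{j,i}^{BM}(X)$ and $H^{2d-j,d-i}_\Nis(X)$; for $\tau=\et$ they are $H_{j,i}^c(X)$ and $H^{2d-j,d-i}(X)$. Since the cancellation theorem, the internal Hom adjunctions, and the cycle $\Delta_X$ are all defined integrally, so are the resulting isomorphisms.

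For the commutativity of the square I would show that the \'etale vertical isomorphism is nothing but $\alpha$ applied to the Nisnevich one. This reduces to the naturality in $\alpha$ of each step in the chain above: for the two uses of the $\otimes$--$\ihom$ adjunction and for the cancellation isomorphism this is immediate because $\alpha$ is monoidal and $M(X)$ is geometric; for the first step it is the assertion $\alpha(\Delta_\Nis)=\Delta_\et$ under the canonical identification $\alpha\,\ihom_\Nis(M(X),\Z(d))\cong\ihom_\et(\alpha M(X),\Z(d))$, which holds because $M(X)$ is dualizable in $\DM_\Nis$ and a monoidal functor transports a dual together with its evaluation datum --- equivalently, because $\Delta_\et$ is induced by the same algebraic cycle $\Delta_X\in CH^d(X\times_K X)$, whose class maps compatibly along the change of topology. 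A diagram chase of a class $\Z(i)[j]\to M^c(X)$ around the two routes then yields the same element of $H^{2d-j,d-i}(X)$.

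The step I expect to be the main obstacle is not the formal diagram chase but the bookkeeping around effectivity: one must know that $\ihom_\Nis(M(X),\Z(d))$ is an effective motive and that $\alpha$ commutes with this effective partial internal Hom on $M(X)$, both of which are handled by Noetherian induction reducing to the smooth case --- this is the content recalled in the text immediately before the statement, so in the proof it should suffice to invoke it. Everything else is routine.
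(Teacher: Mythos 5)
Your argument is correct and essentially the same as the paper's: both obtain the vertical isomorphisms by composing with $\Delta_\Nis$ (resp.\ its \'etale image) and applying the $\otimes$--$\ihom$ adjunction together with cancellation, and both reduce the commutativity of the square to the compatibility $\alpha\,\ihom_\Nis(M(X),\Z(d))\iso\ihom_\et(M(X),\Z(d))$ over $K=\bar K$. The paper simply asserts this last point, whereas you add the monoidal/dualizability justification for $\alpha(\Delta_\Nis)=\Delta_\et$, which is a harmless elaboration of the same route.
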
 
\begin{proof} By adjunction (\cf \cite[Remarks 14.12 \&  8.21]{VL}) we have
$$\Hom_{\DM^\eff_{\tau}} (M (X)(i)[j],  \Z(d)[2d])\cong \Hom_{\DM^\eff_{\tau}} (\Z(i)[j], \ihom_\tau (M (X), \Z(d))[2d]) $$
and $M(X)^*(d) =\ihom_\tau (M (X), \Z (d))$ for both $\tau =\Nis $ and $\et$. 
 By composition with the quasi-isomorphism $\Delta_\Nis$ we obtain an isomorphism
$$\Hom_{\DM^\eff_{\Nis}} (\Z(i)[j], M^c (X)) \longby{\simeq} \Hom_{\DM^\eff_{\Nis}} (\Z(i)[j], \ihom_\Nis (M (X), \Z (d))[2d])$$
and $\alpha (M (X)^*(d))$ is $M (X)^*(d)$ in $\DM^\eff_{\et}$, \ie the canonical map 
$$\alpha\ihom_\Nis (M (X), \Z (d))\to \ihom_\et (M (X), \Z (d))$$
induced by the change of topology is an isomorphism in $\DM^\eff_{\et}$ over $K = \bar K$.
\end{proof}
\begin{lemma}[\protect{Poincar\'e Duality}]\label{pdual} For $X$ smooth over $K$, $\sigma: K \into \C$ and $\dim (X)= d $ we have 
$H^{2d-j, d-i}_\an(X)_\Q \cong H_{j, i}^{\an,c} (X)_\Q$, $H^{2d-j}_\dr(X) \cong H_{j}^{\dr,c} (X)$ and $H^{2d-j, d-i}_{\varpi}(X)_\Q \cong H_{j, i}^{\varpi,c} (X)_\Q$.
\end{lemma}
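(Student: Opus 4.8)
The plan is to deduce all three isomorphisms from the Poincar\'e--Verdier duality isomorphism $M^c (X)\cong M (X)^*(d)[2d]$ obtained in Lemma~\ref{dual}, by transporting it through the Betti realization, the de Rham realization and the period comparison $\varpi^M$ of Lemma~\ref{APLemma}: the first two are monoidal and the whole package is natural in the motive. Since $M (X)$ is a geometric motive it is strongly dualizable with dual $M (X)^*$, so any strong monoidal functor sends $M (X)^*$ to the strong dual of the image of $M (X)$, and each of the three asserted isomorphisms then becomes ordinary (Tate--twisted) Poincar\'e duality in the target. Working with rational coefficients one has this duality over any base field $K$, because $\alpha$ is then an equivalence and $\Delta_\Nis$ already provides the isomorphism in $\DM^\eff_\Nis$; for the de Rham statement this works integrally, $\uOm$ and $\bOm$ being objects of $\DM_\Nis$ too.

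First I would treat the Betti case. The left adjoint $\beta_\sigma$ is strong monoidal, $\beta_\sigma(\Z (i))\cong\Z_\an(i)$, and $\beta_\sigma(M (X))$ is the singular chain complex $C_*(X_\an)$ (as recalled in the proof of Lemma~\ref{compareBM}), which is a perfect complex and hence dualizable in $D(\Z)$. Strong dualizability therefore identifies $\beta_\sigma(M^c (X))$ with $\RHom_\Z\big(C_*(X_\an),\Z_\an(d)\big)[2d]$. Plugging this into the rational quasi-isomorphism $M^c (X)\otimes\beta^\sigma(\Z)\longby{\qi}\beta^\sigma\beta_\sigma(M^c (X))$, using the adjunction between $\beta_\sigma$ and $\beta^\sigma$ together with the tensor--hom adjunction, and bookkeeping the twist exactly as $\tau^\dr_{j,i}$ does, one computes
\[
H^{\an,c}_{j,i}(X)_\Q\cong\Hom_{D(\Z)}\big(C_*(X_\an)\otimes\Q_\an(i)[j],\,\Q_\an(d)[2d]\big)\cong\Hom_{D(\Z)}\big(C_*(X_\an),\,\Q_\an(d-i)[2d-j]\big),
\]
and the right-hand side is $H^{2d-j,d-i}_\an(X)_\Q$. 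This is just Poincar\'e duality for the oriented real manifold $X_\an$ of dimension $2d$, and it is compatible with the localization triangle of a compactification of $X$ exactly as in Lemma~\ref{compareBM}.

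For the de Rham case I would run the same argument with $-\otimes\bOm$ in place of $-\otimes\beta^\sigma(\Z)$: this is again a monoidal realization on $\DM_\et$, on $M (X)$ it computes $H^*_\dr(X)$, and $\bOm (q)\longby{\qi}\bOm$ for every $q$. Applying it to $\Delta_\Nis$ and using the tensor--hom adjunction gives
\[
H^{\dr,c}_j(X)=\Hom_{\DM_\et}\big(\Z[j],M^c (X)\otimes\bOm\big)\cong\Hom_{\DM_\et}\big(M (X),\bOm[2d-j]\big)=H^{2d-j}_\dr(X),
\]
which is de Rham Poincar\'e duality. The period comparison is then purely formal: the Betti and the de Rham isomorphisms above are the images of the \emph{same} morphism $M^c (X)\cong M (X)^*(d)[2d]$ under $-\otimes\beta^\sigma(\Z)$ and $-\otimes\bOm$ respectively, and, $\varpi^M$ being natural in $M$, it intertwines them after $-\otimes\C$. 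Hence the period isomorphism for $M^c (X)$ carries $H^{\an,c}_{j,i}(X)$ onto $H^{2d-j,d-i}_\an(X)$ and $H^{\dr,c}_j(X)$ onto $H^{2d-j}_\dr(X)$ compatibly inside the common complexification, so it identifies the intersections $H^{\varpi,c}_{j,i}(X)_\Q=H^{\an,c}_{j,i}(X)\cap H^{\dr,c}_j(X)$ and $H^{2d-j,d-i}_\varpi(X)_\Q=H^{2d-j,d-i}_\an(X)\cap H^{2d-j}_\dr(X)$.

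The real obstacle I anticipate is not the dualizability, which is formal, but the normalization: one must verify that the identifications $\beta_\sigma(\Z (i))\cong\Z_\an(i)$ and $\bOm (q)\longby{\qi}\bOm$ are compatible with the strong-duality isomorphism and with $\varpi^M$, \ie that the power of $2\pi i$ coming from the fundamental class of the manifold $X_\an$ is precisely the twist $d-i$ appearing on the cohomological side. Everything is forced once the Tate twists are pinned down, but this is the point at which a sign or a twist could slip.
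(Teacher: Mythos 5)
Your proposal is correct and follows essentially the same route as the paper: both arguments push the duality isomorphism $M^c (X)\cong M (X)^*(d)[2d]$ of Lemma~\ref{dual} through the Betti and de Rham realizations (using the adjunction $\beta_\sigma\dashv\beta^\sigma$ with the rational quasi-isomorphism $\beta^\sigma\beta_\sigma\Q(q)\cong\Q(q)\otimes\beta^\sigma(\Q)$, and $\bOm(d)\simeq\bOm$) and then invoke the compatibility of the homological period isomorphism of Lemma~\ref{APLemma} with the cohomological one to identify the intersections $H^{2d-j,d-i}_{\varpi}(X)_\Q\cong H_{j,i}^{\varpi,c}(X)_\Q$. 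The only difference is cosmetic: you descend to $D(\Z)$ via strong monoidality and dualizability of the chain complex, whereas the paper stays in $\DM_{\et,\Q}$ and applies the internal Hom adjunction directly.
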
 
\begin{proof}
For $q = d-i$ and $p = 2d -j$ we have 
$$H^{\an,c}_{j, i}(X)_\Q =\Hom_{\DM_{\et,\Q}} (\Z(i)[j], M^c(X)\otimes\beta^\sigma (\Q)) \cong \Hom_{\DM_{\et,\Q}} (M (X),\beta^\sigma \beta_\sigma \Q (q))[p])$$
by the duality isomorphism $M^c (X) \cong M (X)^*(d)[2d]$ and making use of the canonical quasi-isomorphism $\beta^\sigma \beta_\sigma \Q (q)= \Q (q)\otimes \beta^\sigma(\Q)$ in $\DM_{\et, \Q}\cong\DA_{\et, \Q}$ (\cf  the proof of Lemma \ref{compareBM}). Here $H^p(X_\an, \Q_\an(q))\cong \Hom_{\DM_{\et,\Q}} (M (X),\beta^\sigma \beta_\sigma \Q (q))[p])$ by adjunction. Moreover, we have 
$$H^{\dr,c}_{j}(X) =\Hom_{\DM_{\et}} (\Z[j], M^c(X)\otimes\bOm ) \cong \Hom_{\DM_{\et}} (M (X), \bOm (d)[2d-j])\cong H^{2d-j}_\dr(X)$$
Finally, the period isomorphism $\beta^\sigma\beta_\sigma \Z (q)\otimes_\Z \C \longby{\qi}\bOm \otimes_K \C$ is obtained as $\beta^\sigma\beta_\sigma \Z (q)\otimes_\Z \C \cong\Z (q)\otimes \beta^\sigma(\Z)\otimes_\Z \C\cong\bOm (q) \otimes_K \C\cong \bOm \otimes_K \C$
via $\varpi^{\Z(q)}$ in Lemma \ref{APLemma}.
Therefore, the period isomorphism in \cite[Lemma 1.2.1]{ABVB} and that in Lemma \ref{APLemma} for $M = M^c (X) \cong M (X)^*(d)[2d]$ corresponds under Poincar\'e duality and they provide the following
$$(H^{2d-j, d-i}_\an(X)_\Q, H^{2d-j}_\dr(X), \varpi_{M (X)}^{2d-j, d-i})\cong (H^{\an,c}_{j, i}(X)_\Q, H_{j}^{\dr,c} (X), \varpi_{j,i}^{M^c(X)}) $$
in the homological periods $\Q$-linear category (see \cite[\S 2.3]{ABVB} for details on period categories). We thus obtain the last claimed isomorphism $H^{2d-j, d-i}_\varpi(X)_\Q \cong H_{j, i}^{\varpi,c} (X)_\Q$. 
\end{proof} 
\begin{propose} \label{regdual}
Let $X$ be smooth and equidimensional over $K =\bar K$ and $\sigma : K \into \C$. For $0\leq i\leq d = \dim (X)$ we have a commutative square
\[\xymatrixcolsep{5pc}\xymatrix{ 
CH^{d-i}(X, j -2i)_\Q \ar[d]^{||}\ar[r]^{c\ell^{2d-j,d-i}_\varpi}& H^{2d-j,d-i}_\varpi (X)_\Q\ar[d]^{||}\\ 
CH_i(X, j -2i)_\Q\ar[r]^{c\ell_{j,i}^\varpi}&  H_{j,i}^{\varpi,c} (X)_\Q
}\]
where we identify period cohomology and homology by Poincar\'e duality. For $X$ proper and smooth $CH_0(X, j)= H_j^{Sus}(X)$ and the target $H_{j,0}^\varpi (X)$ is given by the usual singular homology.
\end{propose}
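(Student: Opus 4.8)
The plan is to obtain the asserted square as the outer rectangle of a two‑square factorization: a \emph{change‑of‑topology/duality} square on the left and a \emph{period‑regulator} square on the right, using only Lemma~\ref{dual}, Lemma~\ref{pdual}, and the naturality of the Betti and de Rham regulators. Throughout we work with $\Q$‑coefficients as in the statement. Recall that $c\ell^{2d-j,d-i}_\varpi$ is the composite $CH^{d-i}(X,j-2i)\cong H^{2d-j,d-i}_\Nis(X)\xrightarrow{\nu^{2d-j,d-i}}H^{2d-j,d-i}(X)\xrightarrow{r^{2d-j,d-i}_\varpi}H^{2d-j,d-i}_\varpi(X)$, and that $c\ell_{j,i}^\varpi$ is the composite $CH_i(X,j-2i)\cong H^{BM}_{j,i}(X)\xrightarrow{\nu_{j,i}}H^c_{j,i}(X)\xrightarrow{r_{j,i}^{\varpi,c}}H^{\varpi,c}_{j,i}(X)$, where the last arrow lands in the intersection $H^{\varpi,c}_{j,i}(X)=H^{\an,c}_{j,i}(X)\cap H^{\dr,c}_j(X)$ by Corollary~\ref{periodreg} (and likewise on the cohomological side via \cite{ABVB}). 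So it is enough to build a diagram whose rows are these composites, whose two outer vertical arrows are the identification $CH^{d-i}=CH_i$ on the left and the Poincaré duality isomorphism $H^{2d-j,d-i}_\varpi(X)_\Q\cong H^{\varpi,c}_{j,i}(X)_\Q$ of Lemma~\ref{pdual} on the right, and which factors through the intermediate groups $H^{2d-j,d-i}(X)_\Q\cong H^c_{j,i}(X)_\Q$.

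First I would dispatch the left square, i.e.\ the compatibility of $\nu^{2d-j,d-i}$ with $\nu_{j,i}$. By definition $CH_i(X,n):=CH^{d-i}(X,n)$, and the isomorphisms $H^{2d-j,d-i}_\Nis(X)\cong CH^{d-i}(X,j-2i)$ and $H^{BM}_{j,i}(X)\cong CH_i(X,j-2i)$ of \cite[Thm.~19.1, Prop.~19.18]{VL} are, by the very construction of the latter, related through the duality quasi‑isomorphism $\Delta_\Nis\colon M^c(X)\xrightarrow{\sim}\ihom_\Nis(M(X),\Z(d))[2d]$; hence under the vertical duality isomorphism of Lemma~\ref{dual} they become the identity on higher Chow groups. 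Since Lemma~\ref{dual} already records the commutativity of the square with edges $\nu^{2d-j,d-i}$, $\nu_{j,i}$ and the two vertical duality isomorphisms, and $\nu$ is an isomorphism rationally, the left square commutes.

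The content is in the right square: that $r^{2d-j,d-i}_\varpi$ corresponds to $r^{\varpi,c}_{j,i}$ under the (étale) duality isomorphism $\alpha\bigl(M(X)^*(d)[2d]\bigr)\cong M^c(X)$ of Lemma~\ref{dual} on the source and the Poincaré duality isomorphisms $H^{2d-j,d-i}_\an(X)_\Q\cong H^{\an,c}_{j,i}(X)_\Q$, $H^{2d-j}_\dr(X)\cong H^{\dr,c}_j(X)$ of Lemma~\ref{pdual} on the target. The de Rham regulator is induced by $\epsilon_{M^c(X)}=M^c(X)\otimes\epsilon$ and the Betti regulator by $\rho_{M^c(X)}=M^c(X)\otimes r^\Z_\sigma$; because $\epsilon$ and $r^\Z_\sigma$ are fixed maps, tensoring the duality quasi‑isomorphism (transported to $\DM_\et$ along $\alpha$) with them yields commuting squares relating $\rho_{M^c(X)}$ to $\rho_{M(X)^*(d)[2d]}$ and $\epsilon_{M^c(X)}$ to $\epsilon_{M(X)^*(d)[2d]}$, simply by naturality of the transformations $\rho$ and $\epsilon$. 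Applying $\Hom_{\DM_\et}(\Z(i)[j],-)$ respectively $\Hom_{\DM_\et}(\Z[j],-)$ and rewriting the duals by rigidity of geometric motives, $\Hom(\Z(i)[j],M(X)^*(d)[2d]\otimes(-))\cong\Hom(M(X),\Z(d-i)[2d-j]\otimes(-))$, together with the Tate‑twist trivializations $\bOm(d)\xrightarrow{\qi}\bOm$ and the projection‑formula quasi‑isomorphism $\beta^\sigma\beta_\sigma\Q(d-i)\xrightarrow{\qi}\Q(d-i)\otimes\beta^\sigma(\Q)$ used already in Lemmas~\ref{compareBM} and~\ref{pdual} (and the effective‑versus‑stable and adjunction identifications there), these become exactly the cohomological Betti and de Rham regulator squares of \cite{ABVB} for $p=2d-j$, $q=d-i$. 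Finally, by the last part of Lemma~\ref{pdual} the period triples $\bigl(H^{2d-j,d-i}_\an(X)_\Q,H^{2d-j}_\dr(X),\varpi^{2d-j,d-i}_{M(X)}\bigr)$ and $\bigl(H^{\an,c}_{j,i}(X)_\Q,H^{\dr,c}_j(X),\varpi^{M^c(X)}_{j,i}\bigr)$ are isomorphic in the period category, so the subgroups $H^{2d-j,d-i}_\varpi(X)_\Q$ and $H^{\varpi,c}_{j,i}(X)_\Q$ cut out as the respective pullbacks correspond, and with them the regulators $r^\varpi$ landing in these pullbacks. Gluing the left and right squares gives the claimed commutative square.

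The final assertion is immediate: for $X$ proper the canonical map $M(X)\to M^c(X)$ is a quasi‑isomorphism, so $H^{BM}_{j,0}(X)=H^{Sus}_j(X)$ and hence $CH_0(X,j)\cong H^{BM}_{j,0}(X)=H^{Sus}_j(X)$ by \cite[Prop.~19.18]{VL}; likewise $M^c(X)=M(X)$ forces $H^{\varpi,c}_{j,0}(X)=H^\varpi_{j,0}(X)$, which by Lemma~\ref{compareBM} — and the fact that $X_\an$ is compact, so Borel--Moore homology coincides with singular homology — is a subgroup of the usual singular homology $H_j(X_\an,\Z_\an)$. The main obstacle is not conceptual but bookkeeping: one must check that Voevodsky's duality isomorphism really is the tautological identification on higher Chow groups (a matter of conventions), and must verify the compatibility of the entire stack of canonical quasi‑isomorphisms — effective versus stable, Nisnevich versus étale, the various Tate‑twist trivializations, and the projection formula for $\beta^\sigma$ — inside one large diagram; each is routine, but aligning them all is where the care lies.
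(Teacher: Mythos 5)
Your argument is correct and follows essentially the same route as the paper: the paper's proof simply cites Lemmas \ref{dual} and \ref{pdual} to produce the square between Nisnevich motivic cohomology/Borel--Moore motivic homology and the period groups and then identifies the sources with higher Chow groups, which is exactly your left (change-of-topology/duality) and right (regulator compatibility under Poincar\'e duality) squares, only spelled out in more detail. Your treatment of the proper case via $M(X)\simeq M^c(X)$ likewise matches the paper's remark that $H^{BM}_{j,0}(X)=H^{Sus}_j(X)$ there.
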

\begin{proof} By construction, Lemmas \ref{dual} - \ref{pdual} yield the following commutative square
\[\xymatrixcolsep{5pc}\xymatrix{ 
H^{2d-j, d-i}_\Nis(X) \ar[d]^{||}\ar[r]^{r^{2d-j, d-i}_{\varpi , \Nis}}& H^{2d-j,d-i}_\varpi (X)\ar[d]^{||}\\ 
H_{j,i}^{BM}(X)\ar[r]^{r^{\varpi,c}_{j,i}}&  H_{j,i}^{\varpi,c} (X)
}\]
which is the claimed one after the identification of the source motivic cohomology/homology with higher cycles. If $X$ is proper and $i=0$ we further have that $H_{j,i}^{BM}(X)=H_{j,0}^{Sus}(X)$.
\end{proof}

\section{Some computations}
\subsection{Motivic Albanese}
Recall the motivic Albanese triangulated functor (see \cite[Def. 5.2.1]{BVK} and \cite[Thm. 2.4.1]{BVA})
$$\LAlb  : \DM^\eff_{\gm}\to D^b (\M )$$ 
where $\DM^\eff_{\gm}\subset \DM^\eff_{\Nis}$ is the triangulated category of effective geometric motives and $ D^b (\M )$ is the derived category of $1$-motives. Rationally, $\LAlb$ yields a left adjoint to the inclusion functor given by $$\Tot : D^b (\M )\to \DM^\eff_{\gm, \et}$$ where $\DM^\eff_{\gm, \et}\subset \DM^\eff_{\et}$ is the triangulated subcategory given by geometric motives under the change of topology \eqref{alpha} (see \cite[Def. 2.7.1]{BVK}). Integrally, recall that for any geometric motive $M\in \DM_\gm^\eff\subset \DM_{\Nis}^\eff$ we have a well defined map (see \cite[(5.1.2) \& Rem. 5.2.2]{BVK})
$$a_M \colon \alpha M\to \Tot \LAlb (M)$$
where $\alpha$ is the change of topology \eqref{alpha}.
By composition we obtain 
$$a\ell b_{j,i}^M: H_{j,i}(M)=\Hom_{\DM^\eff_\et}(\Z(i)[j], \alpha M)\to \Hom_{\DM^\eff_\et}(\Z(i)[j], \Tot \LAlb (M)):= H_{j,i}^{(1)}(M)$$
for any $M\in \DM_\gm^\eff$. Applying $\LAlb$ to the motive $M=M(X)$ of any algebraic scheme $X$ we obtain $\LAlb(X)$, for the motive with compact support $M = M^c(X)$ we get the Borel-Moore Albanese complex $\LAlb^c(X) \in D^b({}_t\M )$ (this is a complex of 1-motives with cotorsion, see \cite[Def. 8.5.1]{BVK}). Dually, we have $\RPic^c (X)\in D^b(\tM)$ (see \cite[\S 8.7]{BVK}). Finally, recall the cohomological Albanese complex $$\LAlb^*(X)\df \LAlb (M(X^{(d)})^*(d)[2d])\in D^b({}_t\M)$$ for $d = \dim (X)$ and where $X^{(d)}$ is the union of the $d$-dimensional components (see \cite[Def. 8.2.6]{BVK}). Dually, we have the homological Picard complex $\RPic^*  (X)\in D^b(\tM)$ (see \cite[\S 8.7]{BVK}).

In particular, for  $M = M(X)$ and $M = M^c(X)$ by composition we get 
$$H_{j,i}(X)=\Hom_{\DM^\eff_\et}(\Z(i)[j], M (X))\to \Hom_{\DM^\eff_\et}(\Z(i)[j], \Tot \LAlb (X)):= H_{j,i}^{(1)}(X)$$
and
$$H_{j,i}^c(X)=\Hom_{\DM^\eff_\et}(\Z(i)[j], M^c (X))\to \Hom_{\DM^\eff_\et}(\Z(i)[j], \Tot \LAlb^c (X)):= H_{j,i}^{(1),c}(X)$$
whence the change of topology map \eqref{cycle} induces the $i$-twisted motivic Albanese
$$a\ell b_{j,i}^c: CH_i(X, j-2i)\to H_{j,i}^{(1),c}(X)$$
for $X$ equidimensional. For $i=0$ we have $\Z (0) = \Tot [\Z \to 0]= \Z$ and we are in the situation of \cite[\S 13.6]{BVK}.\footnote{Note that the group $H_{j,0}^{(1),c}(X)$ is denoted $H^{c,(1)}_j(X)$ in \loccit} Since $\Tot$ is fully faithful we obtain the Albanese map from Suslin homology 
$a\ell b_{j}: H_j^{Sus}(X)\to H_{j,0}^{(1)}(X)= \HH^{-j}(K, \LAlb(X))$  and the Borel-Moore version
\begin{equation}\label{hcyclemap}
a\ell b_{j}^c: CH_0(X, j)\to H_{j,0}^{(1),c}(X)= \Hom_{D^b({}_t\M)} (\Z[j], \LAlb^c(X))= \HH^{-j}(K, \LAlb^c(X)).
\end{equation}
(Remark that for $i=1$ $\Z (1) \cong \Tot [0\to \G_m] = \G_m [-1]$ and we can also get the 1-twisted motivic Albanese 
$H_{j,1}^{(1), c}(X)= \EExt^{1-j} (\G_m, \LAlb^c(X))$
but we now just consider higher zero cycles). Similarly, for $M =M(X^{(d)})^*(d)[2d]$ we also get the map (see \cite[\S 13.7]{BVK})
\begin{equation}\label{cohalbmap}
a\ell b_{j}^*: H^{2d-j}_\eh(X, \Z (d))\to H^{2d-j,d}_{(1)}(X)=\HH^{-j}(K, \LAlb^*(X))
\end{equation}
where we may and will assume $X$ purely $d$-dimensional, for simplicity. 

By taking the $j$-th homology $\LA{j} (X)\df {}_tH^{-j} (\LAlb (X))$, $\LA{j}^c (X)\df {}_tH^{-j} (\LAlb^c(X))$ and $\LA{j}^* (X)\df {}_tH^{-j} (\LAlb^*(X))$ we get $1$-motives with cotorsion explicitely represented by 
$$\LA{j} (X)\df [L_j\by{u_j}G_j]\in {}_t\M$$ and the following 
$$\LA{j}^\dag(X)\df  [L_j^\dag\by{u_j^\dag}G_j^\dag]\in {}_t\M$$  for decorations $\dag= c$ or $*$.
Over $K=\bar K$ the abelian category ${}_t\M$ is of cohomological dimension $1$. Thus, for any $N\in{}_t\M$ and $M\in \DM^\eff_\gm$ we get short exact sequences
\begin{equation}\label{ext}
0\to \Ext (N, \LA{j+1}(M))\to \EExt^{-j} (N, \LAlb(M))
\longby{\pi_j}\Hom (N, \LA{j}(M))\to 0
\end{equation}
given by the canonical spectral sequence where the $\Hom$ and Yoneda $\Ext$ are in  ${}_t\M$  (\cf \cite[Proof of Lemma 3.2.1]{ABVB}). 
Recall that the $K$-points of a 1-motive $\LAlb_j(M)\in {}_t\M$ for $M\in \DM^\eff_\gm$ can be regarded as 
$$\LAlb_j(M)(K):= \Ext ([\Z\to 0], \LAlb_j(M)) \cong \Ext (\RPic^j(M), [0\to \G_m])$$
where $\RPic^j(M)$ is the Cartier dual of $\LAlb_j(M)$ and the isomorphism is a neat generalisation of the Weil-Barsotti formula (\cf \cite[Lemma 1.13.3, Prop. 1.13.5 \& Cor. 5.3.2]{BVK}).
\begin{lemma}\label{cohreproj} Let $X$ be over $K =\bar K$ of pure dimension $d = \dim (X)$ and assume that $\LA{k} (X),\LA{k}^\dag(X)\in \M$ are Deligne 1-motives for $k=j, j+1$. With or without decorations $\dag= c, *$ the group $\HH^{-j}(K, \LAlb^\dag(X))$ is an extension of the finitely generated free group $\ker (L_j^\dag\by{u_j^\dag}G_j^\dag(K))$ by the divisble group $$\LAlb_{j+1}^\dag(X)(K)=\coker (L_{j+1}^\dag\by{u_{j+1}^\dag}G_{j+1}^\dag(K))$$ and vanishes for $j<0$ or $j> \max (2, d+1)$.
The Albanese map $a\ell b_j$ and the decorations $a\ell b_j^c$ \eqref{hcyclemap} and  $a\ell b_j^*$ \eqref{cohalbmap} induce mappings
$$\delta_j: H_j^{Sus}(X)\to \ker (L_j\by{u_j}G_j(K)),$$
$$\delta_j^c: CH_0(X, j)\to \ker (L_j^c\by{u_j^c}G_j^c(K))$$
and
$$\delta_j^*: H^{2d-j}_\eh(X, \Z (d))\to \ker (L_j^*\by{u_j^*}G_j^*(K))$$
where $H_j^{Sus}(X)_j:= \ker \delta_j, CH_0(X, j)_j:=  \ker \delta^c_j$ and $H^{2d-j}_\eh(X, \Z (d))_j:= \ker \delta_j^*$ contain $D_j$, $D_j^c$ and $D^j$ the maximal divisible subgroups, respectively. For $X$ smooth $\delta_j^c\cong\delta_j^*$ under duality (see \S \ref{dualsect}) and for $X$ proper $\delta_j\cong\delta_j^c$. If $\LA{k} (X),\LA{k}^\dag(X)\in {}_t\M$ do have cotorsion the same holds true up to finite groups.
\end{lemma}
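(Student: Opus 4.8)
The plan is to read off the whole statement from the short exact sequence \eqref{ext}, evaluated on the unit $1$-motive $N = \Z(0) = [\Z\to 0]$ and on $M = M(X)$, $M^c(X)$, respectively $M(X^{(d)})^*(d)[2d]$, so that $\LAlb(M) = \LAlb^\dag(X)$ for $\dag$ empty, $c$, respectively $*$, and its middle term $\EExt^{-j}(\Z(0), \LAlb^\dag(X))$ is exactly $\HH^{-j}(K, \LAlb^\dag(X))$, the common target of $a\ell b_j$, of $a\ell b_j^c$ (via \eqref{hcyclemap}) and of $a\ell b_j^*$ (via \eqref{cohalbmap}). Thus \eqref{ext} becomes
$$0\to \Ext(\Z(0), \LA{j+1}^\dag(X))\to \HH^{-j}(K, \LAlb^\dag(X))\longby{\pi_j}\Hom(\Z(0), \LA{j}^\dag(X))\to 0,$$
and it then remains to identify the two outer terms, to bound the range of nonvanishing, and to assemble the maps $\delta_j^\dag$ with their compatibilities.

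First I would compute the Hom and Ext groups against $\Z(0)$ inside ${}_t\M$. For a Deligne $1$-motive $[L\by{u}G]$, a morphism $[\Z\to 0]\to[L\by{u}G]$ is a homomorphism $\Z\to L$ whose composite with $u$ vanishes, so $\Hom_{{}_t\M}(\Z(0), [L\by{u}G]) = \ker(u\colon L\to G(K))$, a subgroup of the lattice $L$, hence finitely generated free. For $\Ext^1$ I would use the generalised Weil--Barsotti formula recalled above (via $\RPic$), or argue directly: a Yoneda extension of $[\Z\to 0]$ by $[L\by{u}G]$ splits on lattices, has the form $[L\oplus\Z\by{u''}G]$ with $u''|_L = u$, and its class is determined by $u''(1)\in G(K)$ modulo $u(L)$; thus $\Ext_{{}_t\M}(\Z(0), [L\by{u}G]) = \coker(u\colon L\to G(K)) = [L\by{u}G](K)$. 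Taking $[L\by{u}G] = \LA{j+1}^\dag(X)$ identifies $\ker\pi_j$ with $\LAlb_{j+1}^\dag(X)(K) = \coker(u_{j+1}^\dag)$, which is divisible since $G_{j+1}^\dag$ is a semiabelian variety over $K = \bar K$, so $G_{j+1}^\dag(K)$ is divisible (an extension of the divisible abelian-variety points by the divisible torus points, the sequence of $\bar K$-points staying exact) and a quotient of a divisible group is divisible; taking $[L\by{u}G] = \LA{j}^\dag(X)$ identifies the target of $\pi_j$ with the finitely generated free group $\ker(u_j^\dag)$. This gives the asserted extension. For the vanishing I would invoke the structure of the motivic Albanese in \cite{BVK}: $\LA{k}^\dag(X) = {}_tH^{-k}(\LAlb^\dag(X))$ is zero for $k<0$ and $k>\max(2, d+1)$, and $\LA{0}^\dag(X)$ is of lattice type (no semiabelian part), whence $\Ext_{{}_t\M}(\Z(0), \LA{0}^\dag(X)) = \Ext^1_\Z(\Z, L_0^\dag) = 0$; so for $j = -1$ the displayed sequence gives $\HH^{1}(K, \LAlb^\dag(X)) = 0$, for $j<-1$ both outer terms vanish, and for $j>\max(2, d+1)$ both $\LA{j}^\dag(X)$ and $\LA{j+1}^\dag(X)$ vanish.

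The maps are then simply $\delta_j := \pi_j\circ a\ell b_j$, $\delta_j^c := \pi_j\circ a\ell b_j^c$ and $\delta_j^* := \pi_j\circ a\ell b_j^*$, landing where claimed by the previous step. Since each target $\ker(u_j^\dag)$ is finitely generated free, hence reduced, any homomorphism into it annihilates the maximal divisible subgroup of its source; so $D_j\subseteq\ker\delta_j = H_j^{Sus}(X)_j$, $D_j^c\subseteq\ker\delta_j^c = CH_0(X, j)_j$ and $D^j\subseteq\ker\delta_j^* = H^{2d-j}_\eh(X, \Z(d))_j$. For $X$ smooth and equidimensional, the duality quasi-isomorphism $M^c(X)\cong M(X)^*(d)[2d]$ of Lemma \ref{dual}, compatible with the change of topology, gives $\LAlb^c(X)\cong\LAlb^*(X)$, hence $(L_j^c, u_j^c, G_j^c) = (L_j^*, u_j^*, G_j^*)$; by naturality of $M\mapsto a_M$ and of the projections $\pi_j$ the square relating $a\ell b_j^c$ and $a\ell b_j^*$ commutes, so $\delta_j^c\cong\delta_j^*$ under $CH_0(X, j) = H_{j,0}^{BM}(X)\cong H^{2d-j}_\eh(X, \Z(d))$; for $X$ proper, $M(X) = M^c(X)$ gives $\LAlb(X) = \LAlb^c(X)$ and $H_j^{Sus}(X) = H_{j,0}^{BM}(X) = CH_0(X, j)$, whence $\delta_j\cong\delta_j^c$. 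In the cotorsion case, where the $\LA{k}^\dag(X)$ only lie in ${}_t\M$, the above computations of $\Hom_{{}_t\M}(\Z(0), -)$ and $\Ext_{{}_t\M}(\Z(0), -)$ change only by the finite cotorsion subgroups involved (and $\Ext^1_\Z(\Z, L_0^\dag)$ may become finite), so every assertion holds up to finite groups.

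The step I expect to be the main obstacle is the structural input from \cite{BVK}: pinning down, uniformly for $\dag$ empty, $c$ and $*$, both the range $0\le k\le\max(2, d+1)$ of nonvanishing of $\LA{k}^\dag(X)$ and the fact that $\LA{0}^\dag(X)$ carries no semiabelian part (which is exactly what forces $\HH^{1} = 0$ in the case $j = -1$), together with checking that the duality isomorphisms of \S\ref{dualsect} are compatible with the integral maps $a_M$ so that $\delta_j^c$ truly matches $\delta_j^*$ and $\delta_j$ matches $\delta_j^c$. The remaining $\Hom$/$\Ext$ computations against $\Z(0)$, the divisibility of $G^\dag(\bar K)$, and the reducedness argument are then routine.
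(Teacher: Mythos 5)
Your proposal is correct and takes essentially the same route as the paper: evaluate the extension \eqref{ext} at $N=[\Z\to 0]$ with $M=M(X)$, $M^c(X)$, $M(X^{(d)})^*(d)[2d]$, identify $\Hom([\Z\to 0],\LA{j}^\dag(X))=\ker u_j^\dag$ (finitely generated free) and $\Ext([\Z\to 0],\LA{j+1}^\dag(X))=\coker u_{j+1}^\dag$ (the $K$-points, via the Weil--Barsotti formula recalled before the lemma), and set $\delta_j^\dag=\pi_j\circ a\ell b_j^\dag$. The paper's proof is just terser; your added verifications (divisibility of $\coker u_{j+1}^\dag$, the vanishing range from the structure of $\LAlb$ in \cite{BVK}, reducedness of the target for the divisible-subgroup claim, and the smooth/proper compatibilities and cotorsion case) are exactly the details it leaves implicit.
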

\begin{proof} 
Follows from $N=[\Z\to 0]$ and $M = M(X)$, $M = M^c(X)$ or $M=M(X^{(d)})^*(d)[2d]$ in the extension \eqref{ext}. We get the group $\Hom ([\Z\to 0], \LA{j}^\dag (X)) = \ker (L_j^\dag\by{u_j^\dag}G_j^\dag(K))$ which is free finitely generated, the divisible group 
$$\Ext ([\Z\to 0], \LA{j+1}^\dag(X))= \coker (L_{j+1}^\dag\by{u_{j+1}^\dag}G_{j+1}^\dag(K))$$
 and $\EExt^{-j} (\Z, \LAlb^\dag(X))\cong \HH^{-j}(K, \LAlb^\dag(X))$.  The maps $\gamma_j$, $\eta_{j}$ and $\theta^{j}$ are simply obtained by composition of $a\ell b_j$, $a\ell b_j^c$ or $a\ell b_j^*$  (from \eqref{hcyclemap} and \eqref{cohalbmap}) with $\pi_j$ and there is a factorisation through the maximal divisible subgroups $D_j$, $D_j^c$ and $D^j$.
\end{proof}
\begin{propose}[\protect{Higher Albanese mappings}]\label{highalb}
For any $M\in \DM^\eff_\gm$ over $K=\bar K$ we have higher Albanese mappings
$$a\ell b_{j}^M: H_{j}(M)_j\to \LAlb_{j+1}(M)(K)$$
where $H_{j}(M)_j$ is the kernel of the composition of $a\ell b_{j,0}^M$ with $\pi_j$ in \eqref{ext} for $N=[\Z\to 0]$.

In particular, for any algebraic $K$-scheme $X$, we then get: 
$$a\ell b_{j}: H_j^{Sus}(X)_j\to \LAlb_{j+1}(X)(K)$$ 
the Borel-Moore Albanese
$$a\ell b_{j}^c: CH_0(X, j)_j\to \LAlb_{j+1}^c(X)(K)$$
and the cohomological Albanese
$$a\ell b_{j}^*: H^{2d-j}_\eh(X, \Z (d))_j\to \LAlb_{j+1}^*(X)(K)$$
where  $\LAlb_{j+1}^\dag(X)(K)$ are the $K$-points as in Lemma \ref{cohreproj}.
\end{propose}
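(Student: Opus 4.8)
The plan is to read the higher Albanese map off directly from the exactness of the sequence \eqref{ext}. First I would specialise \eqref{ext} at $N = [\Z\to 0]$ and at the Albanese complex $\LAlb(M)$, obtaining
$$0\to \Ext ([\Z\to 0], \LA{j+1}(M))\to \EExt^{-j} ([\Z\to 0], \LAlb(M))\longby{\pi_j}\Hom ([\Z\to 0], \LA{j}(M))\to 0 .$$
Here the middle term is $\EExt^{-j}([\Z\to 0], \LAlb(M)) \cong \HH^{-j}(K, \LAlb(M)) = H_{j,0}^{(1)}(M)$, the target of $a\ell b_{j,0}^M$; the right-hand term is $\Hom([\Z\to 0], \LA{j}(M)) = \ker (L_j\by{u_j}G_j(K))$, the free finitely generated group of Lemma \ref{cohreproj}; and, writing $\LAlb_{j+1}(M) = \LA{j+1}(M) = {}_tH^{-(j+1)}(\LAlb(M))$, the generalised Weil--Barsotti formula recalled before Lemma \ref{cohreproj} identifies $\ker\pi_j = \Ext([\Z\to 0], \LA{j+1}(M))$ with the group of $K$-points $\LAlb_{j+1}(M)(K) = \coker(L_{j+1}\by{u_{j+1}}G_{j+1}(K))$.

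Next I would form the composite $\pi_j\circ a\ell b_{j,0}^M\colon H_j(M) = H_{j,0}(M)\to \ker (L_j\by{u_j}G_j(K))$, whose kernel is by definition $H_j(M)_j$. For any class $x\in H_j(M)_j$ one has $\pi_j\big(a\ell b_{j,0}^M(x)\big) = 0$, so by exactness of \eqref{ext} the class $a\ell b_{j,0}^M(x)$ lies in $\ker\pi_j = \LAlb_{j+1}(M)(K)$; one then simply defines $a\ell b_j^M(x) := a\ell b_{j,0}^M(x)$ regarded in $\LAlb_{j+1}(M)(K)$. This is the whole construction in the abstract case: it amounts to a one-line diagram chase, and I do not expect a genuine obstacle here, only the need to unwind the definition of $H_j(M)_j$ and to invoke the exactness of \eqref{ext}.

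Finally I would specialise $M$ to the three geometric motives, precomposing in each case with a comparison already recalled in the text. For $M = M(X)$: full faithfulness of $\Tot$ gives $H_{j,0}^{(1)}(M(X)) = \HH^{-j}(K, \LAlb(X))$, the Albanese map from Suslin homology is $a\ell b_{j,0}^{M(X)}\circ\nu_j$ with $\nu_j\colon H_j^{Sus}(X)\to H_j(X) = H_{j,0}(M(X))$, and since $H_j^{Sus}(X)_j$ is, by its definition in Lemma \ref{cohreproj}, precisely $\nu_j^{-1}\big(H_j(X)_j\big)$, one obtains $a\ell b_j\colon H_j^{Sus}(X)_j\to \LAlb_{j+1}(X)(K)$. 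For $M = M^c(X)$ one uses $CH_0(X, j)\cong H_{j,0}^{BM}(X)$ together with \eqref{hcyclemap}, obtaining $a\ell b_j^c\colon CH_0(X, j)_j\to \LAlb_{j+1}^c(X)(K)$. For $M = M(X^{(d)})^*(d)[2d]$ (taking $X$ purely $d$-dimensional for simplicity, so that $X^{(d)} = X$) one uses the duality identification $H^{2d-j}_\eh(X, \Z(d)) = H^{2d-j, d}(X)\cong H_{j,0}(M(X)^*(d)[2d])$ of \S\ref{dualsect} together with \eqref{cohalbmap}, obtaining $a\ell b_j^*\colon H^{2d-j}_\eh(X, \Z(d))_j\to \LAlb_{j+1}^*(X)(K)$, where throughout $\LAlb_{j+1}^\dag(X)(K) = \coker (L_{j+1}^\dag\by{u_{j+1}^\dag}G_{j+1}^\dag(K))$ as in Lemma \ref{cohreproj}. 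The only mildly delicate point --- not a real obstacle --- is the cotorsion bookkeeping: when the relevant $1$-motives lie in ${}_t\M$ rather than $\M$ one must read the codomain as the group of $K$-points of a $1$-motive with cotorsion, but since ${}_t\M$ has cohomological dimension $1$ over $K = \bar K$ the sequence \eqref{ext} still holds and the construction goes through verbatim.
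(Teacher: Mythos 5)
Your proposal is correct and follows exactly the route the paper intends: the paper's proof is literally ``straightforward from Lemma \ref{cohreproj} and \eqref{ext}'', i.e.\ the one-line diagram chase you spell out, with $\ker\pi_j=\Ext([\Z\to 0],\LA{j+1}(M))=\LAlb_{j+1}(M)(K)$ by the definition of $K$-points recalled before Lemma \ref{cohreproj}, followed by the same three specialisations of $M$.
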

\begin{proof} Straightforward from  Lemma \ref{cohreproj} and \eqref{ext}.  \end{proof}
\subsection{Mixed realisations and de Rham-Betti formalism}\label{BdeR}
We can reconstruct the period regulators by making use of mixed and Betti-de Rham realisations.
Recall that we have the Betti-de Rham category $\Mod_{\Q,K}^{\cong}$ in \cite[\S 2 ]{ABVB} along with an enrichment given by a forgetful functor $\cMR\to \Mod_{\Q,K}^{\cong}$ from the category of polarizable mixed realisations for which we also have $\cMR_{(1)}\subset \cMR^\eff$ the subcategory of level 1-mixed realisations along with a left adjoint $( - )_{\leq 1}$ (see \cite[Prop. 16.1.2]{BVK} and  \cite[\S 6.3]{Hu}). Huber's mixed realisation $R^{\cMR}$ on $\DM_\gm^\Q$  is also providing a Betti-de Rham realisation $R_{\Q,K}$ (\cite[Remark 6.3.4 and Thm. 6.3.15]{Hu}) fitting in the following diagram
\[\xymatrixcolsep{5pc}\xymatrix{ 
\DM_\gm^\Q \ar@/^1.6pc/[rr]^{R_{\Q,K}} \ar@/^1.3pc/[d]^{\LAlb}\ar[r]^{R^{\cMR}}& D_{\cMR}\ar[r]^{\rm forget}\ar@/^1.3pc/[d]^{(\ )_{\leq 1}} &D^-_{\Q,K}\\ 
D(\cM_1^\Q)\ar[r]^{R^{\cMR}_1}\ar@/^1.3pc/[u]^{\Tot}&  D_{\cMR ,{(1)}}\ar@/^1.3pc/[u] &}
\]
where $D^-_{\Q,K}$ is the derived category of homological $(\Q,K)$-vector spaces (akin to \cite[\S 5.3]{Hu}). We have Betti-de Rham cohomology
$$H_{j}R_{\Q,K}(M(X)^*(d)[2d]) = (H^{2d-j, d}_\an(X)_\Q, H^{2d-j}_\dr(X), \varpi_{M (X)}^{2d-j, d}):= H_{\rm BdR}^{2d-j, d}(X)_\Q,$$ 
the Borel-Moore Betti-de Rham homology
$$H_jR_{\Q,K}(M^c(X)) = (H^{\an,c}_{j, 0}(X)_\Q, H_{j}^{\dr,c} (X), \varpi_{j,0}^{M^c(X)}):=H^{\rm BdR, c}_{j, 0}(X)_\Q$$
and the same holds true for the ordinary homology. Moreover
$$\Hom (\Z, H_{j}R_{\Q,K}(M(X)^*(d)[2d]) )= H^{2d-j, d}_\varpi (X)_\Q$$
and 
$$\Hom (\Z, H_jR_{\Q,K}(M^c(X)) )= H_{j, 0}^{\varpi,c}(X)_\Q$$
where the $\Hom$ here is taken in $\Mod_{\Q,K}^{\cong}$ (see  \cite[Cor. 3.1.2]{ABVB}). We also have a Betti-de Rham realisation for 1-motives 
$$T_{\rm BdR}^\Q:\cM_1^\Q \to\Mod_{\Q,K}^{\cong}$$
which is exact and faithful (see \cite[Def. 2.5.1]{ABVB}) and 
$$T_{\rm BdR}^\Q:D^b(\cM_1^\Q) \to D^-_{\Q, K}$$
whose mixed enrichement is given by $R_1^\cMR$ (as a consequence of the compatibility stated in \cite[Thm. 15.4.1]{ABVB} and \cite{VO}). 
\begin{lemma} \label{key}
For any $M\in\DM_\gm^{\eff,\Q}$ we have isomorphisms
$$R_{\Q,K}(M)_{\leq 1}\iso T_{\rm BdR}^\Q(\LAlb(M)) \hspace{0.5cm}and\hspace{0.5cm} H_j(R_{\Q,K}(M))_{\leq 1}\iso T_{\rm BdR}^\Q(\LA{j}(M))$$
in $D^-_{\Q, K}$ and $\Mod_{\Q,K}^{\cong}$, respectively, where $(-)_{\leq 1}$ is given by forgetting. These isomorphisms induce the following commutative square
\[\xymatrixcolsep{5pc}\xymatrix{ 
H_{j, 0}(M)_\Q=\Hom (\Z[j] , M) \ar[d]^{a\ell b_{j,0}^M}\ar[r]^{r_{j,0}^\varpi}& \Hom (\Z, H_jR_{\Q,K}(M))=H_{j, 0}^\varpi(M)_\Q    \ar[d]^{( \ )_{\leq 1}}\\ 
H_{j, 0}^{(1)}(M)_\Q=\Hom (\Z[j], \LAlb (M))\ar[r]^{r_{j,0}^{\varpi, (1)}}&  \Hom (\Z , T_{\rm BdR}^\Q(\LA{j} (M)))= T_{\varpi}^\Q(\LA{j} (M))
}\]
where $r_{j,0}^{\varpi, (1)}$ is surjective over $K=\bar \Q$.
\end{lemma}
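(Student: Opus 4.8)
The plan is to build everything from the interplay between Huber's mixed realisation $R^{\cMR}$ and the motivic Albanese $\LAlb$, together with the full faithfulness of the de Rham--Betti realisation of $1$-motives over $\bar\Q$ (\cite[Thm. 2.7.1]{ABVB}).

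\emph{The isomorphisms.} Rationally we may identify $\DM_{\gm}^{\eff,\Q}$ with $\DM_{\gm,\et}^{\eff,\Q}$ and regard $\Tot : D^b(\cM_1^\Q)\to \DM_{\gm}^{\eff,\Q}$ as a fully faithful inclusion with left adjoint $\LAlb$; similarly $\iota : D_{\cMR,(1)}\into D_{\cMR}$ has left adjoint $(-)_{\leq 1}$, and $R^{\cMR}$ carries one inclusion to the other, $R^{\cMR}\circ\Tot\cong\iota\circ R_1^{\cMR}$. Applying $R^{\cMR}$ to the unit $a_M : M\to\Tot\LAlb(M)$ and transposing across the adjunction $(-)_{\leq 1}\dashv\iota$ produces a canonical morphism $R^{\cMR}(M)_{\leq 1}\to R_1^{\cMR}(\LAlb(M))$, and I would show it is an isomorphism. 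Both $M\mapsto R^{\cMR}(M)_{\leq 1}$ and $M\mapsto R_1^{\cMR}(\LAlb(M))$ are triangulated functors and the morphism is natural in $M$, so the full subcategory of $M$ on which it is invertible is thick; since $\DM_{\gm,\et}^{\eff,\Q}$ is generated by the $M(X)$ with $X$ smooth, it suffices to treat $M=M(X)$, where the assertion is precisely the compatibility of $\LAlb(M(X))$ with realisations proved in \cite[Thm. 15.4.1]{ABVB} (and \cite{VO}): realising $\LAlb(M(X))$ gives back the complex of Deligne $1$-motives forming the level-$\leq 1$ part of the cohomology realisation of $X$. Forgetting the mixed enrichment yields $R_{\Q,K}(M)_{\leq 1}\iso T_{\rm BdR}^\Q(\LAlb(M))$ in $D^-_{\Q,K}$; taking $j$-th homology, and using that $(-)_{\leq 1}$ is exact and that $T_{\rm BdR}^\Q$ is exact (\cite[Def. 2.5.1]{ABVB}), gives $H_j(R_{\Q,K}(M))_{\leq 1}\iso T_{\rm BdR}^\Q({}_tH^{-j}\LAlb(M))=T_{\rm BdR}^\Q(\LA{j}(M))$ in $\Mod_{\Q,K}^{\cong}$.

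\emph{The square.} This is pure naturality. The left vertical $a\ell b_{j,0}^M$ is $\Hom(\Z[j],-)$ applied to $a_M$ (using $\Tot$ fully faithful); the top horizontal $r_{j,0}^\varpi$ is $R_{\Q,K}$ followed by $H_j$, under the identification $\Hom_{\Mod_{\Q,K}^{\cong}}(\Z,H_jR_{\Q,K}(M))=H_{j,0}^\varpi(M)_\Q$ of \cite[Cor. 3.1.2]{ABVB}; the right vertical is the level-$\leq 1$ truncation composed with the forgetful functor; and the bottom horizontal is $T_{\rm BdR}^\Q$. Commutativity holds because all four arrows arise by applying $R^{\cMR}$, then $(-)_{\leq 1}$ and forgetting, to the single map $a_M$, and the isomorphism $H_j(R_{\Q,K}(M))_{\leq 1}\cong T_{\rm BdR}^\Q(\LA{j}(M))$ was itself defined through $a_M$.

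\emph{Surjectivity over $\bar\Q$.} I would factor $r_{j,0}^{\varpi,(1)}$ through the edge map $\pi_j$ of \eqref{ext}. For $f:\Z[j]\to\LAlb(M)$ in $D^b(\cM_1^\Q)$, exactness of $T_{\rm BdR}^\Q$ gives $H_j(T_{\rm BdR}^\Q f)=T_{\rm BdR}^\Q(H_jf)=T_{\rm BdR}^\Q(\pi_j(f))$, so $r_{j,0}^{\varpi,(1)}=T_{\rm BdR}^\Q\circ\pi_j$, where now $T_{\rm BdR}^\Q:\Hom_{\cM_1^\Q}(\Z,\LA{j}(M))\to\Hom_{\Mod_{\Q,K}^{\cong}}(\Z,T_{\rm BdR}^\Q(\LA{j}(M)))$. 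Now $\pi_j$ is surjective because ${}_t\M^\Q\cong\cM_1^\Q$ is of cohomological dimension $1$ over $\bar K$ (this is the surjectivity built into \eqref{ext}), and the displayed map is bijective by full faithfulness of the de Rham--Betti realisation of $1$-motives over $\bar\Q$ (\cite[Thm. 2.7.1]{ABVB}); the composite is therefore surjective. I expect the first step --- recognising $R^{\cMR}\circ\LAlb$ as the truncation $(-)_{\leq 1}\circ R^{\cMR}$ --- to be the real obstacle, since it is where the geometry of $\LAlb(M(X))$ enters; once that compatibility and the full faithfulness over $\bar\Q$ are in hand, the square and the surjectivity are formal.
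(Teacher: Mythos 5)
Your proposal is correct and follows essentially the same route as the paper: the commutative square is obtained, as there, by applying the realisation to the Albanese unit $a_M$ and passing to $H_j$, and the surjectivity of $r_{j,0}^{\varpi,(1)}$ is proved identically, by factoring it through the edge map $\pi_j$ of \eqref{ext} (onto because ${}_t\M$ has cohomological dimension $1$ over $\bar K$) followed by the bijection furnished by the full faithfulness of $T_{\rm BdR}^\Q$ over $\bar\Q$ (\cite[Thm. 2.7.1]{ABVB}). The only divergence is the first step, where the paper simply invokes \cite[Thm. 16.3.1 \& Cor. 16.3.2]{BVK} for the isomorphisms $R_{\Q,K}(M)_{\leq 1}\cong T_{\rm BdR}^\Q(\LAlb(M))$ and $H_j(R_{\Q,K}(M))_{\leq 1}\cong T_{\rm BdR}^\Q(\LA{j}(M))$, while you sketch a re-proof of that compatibility by d\'evissage to $M(X)$ with $X$ smooth; that is legitimate but amounts to reproving the quoted theorem, whose deep content (the identification of the realisation of $\LAlb$ with the level $\leq 1$ part, via \cite{VO} and the comparison results) you would still have to import at the base case.
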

\begin{proof} The claimed isomorphisms are an instance of \cite[Thm. 16.3.1 \& Cor. 16.3.2]{BVK} in $D_\cMR$ and $\cMR$ thus also in $D^-_{\Q, K}$ and $\Mod_{\Q,K}^{\cong}$ by forgetting. We then get the following commutative square 
    \[\xymatrixcolsep{5pc}\xymatrix{ 
\Hom (\Z[j] , M) \ar[d]^{\LAlb}\ar[r]^{R_{\Q,K}}& \Hom (\Z[j], R_{\Q,K}(M))  \ar[d]^{( \ )_{\leq 1}}\\ 
\Hom (\Z[j] , \LAlb (M))\ar[r]^{T_{\rm BdR}}&  \Hom (\Z[j], T_{\rm BdR}^\Q(\LAlb (M)))
}\]
and passing to $H_j$ we get the claimed one where the composite
$$r_{j,0}^{\varpi, (1)}:\Hom (\Z[j] , \LAlb (M))\longby{\pi} \Hom (\Z, \LA{j} (M))\iso \Hom (\Z , T_{\rm BdR}(\LA{j} (M)))$$
is onto: actually, over $K=\bar \Q$, $\pi$ in \eqref{ext} for $N=\Z$ is onto and the faithful functor $T_{\rm BdR}$ is also full by \cite[Thm. 2.7.1]{ABVB}.
\end{proof}
\begin{thm} \label{1-motivic}
With rational coefficients there exist \emph{surjective} maps $c\ell_{j,0}^{\varpi, (1)}:H_{j,0}^{(1)}(X)_\Q\to  T_\omega (\LA{j}^c(X))$ and 
$r^{2d-j, d}_{\varpi, (1)}:H^{2d-j}_{\eh, (1)}(X, \Q (d))\to T_\omega (\LA{j}^*(X))$ induced by the $1$-motivic part of the period homology and cohomology, respectively, for all $j\geq 0$.
\end{thm}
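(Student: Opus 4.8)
The plan is to deduce both assertions from Lemma \ref{key}, by specialising the motive $M$ there to $M^c(X)$ and to $M(X^{(d)})^*(d)[2d]$ respectively. Both of these lie in $\DM_\gm^{\eff,\Q}$: the motive with compact support is geometric by construction, and $M(X^{(d)})^*(d)[2d]$ is geometric because $\DM_\gm$ is rigid, and effective by the discussion in \S \ref{dualsect}. For the cohomological statement I would assume, as in \S \ref{BdeR}, that $X$ is purely $d$-dimensional so that $X = X^{(d)}$; the general case reduces to this by passing to the union of the top-dimensional components.

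First I would treat the Borel--Moore cycle map, taking $M = M^c(X)$ in Lemma \ref{key}. By definition $\LAlb(M^c(X)) = \LAlb^c(X)$, hence $\LA{j}(M^c(X)) = \LA{j}^c(X)$ in $\cM_1^\Q$, so the bottom row of the square of Lemma \ref{key} becomes $H_{j,0}^{(1),c}(X)_\Q = \Hom(\Z[j],\LAlb^c(X)) \to \Hom(\Z, T_{\rm BdR}^\Q(\LA{j}^c(X))) = T_\omega(\LA{j}^c(X))$, which is surjective over $K = \bar\Q$. It then remains to recognise this arrow as $c\ell_{j,0}^{\varpi,(1)}$, i.e.\ to match the whole square with the corresponding diagram of the introduction: on the top row $H_{j,0}(M^c(X))_\Q = H_{j,0}^c(X)_\Q \cong CH_0(X,j)_\Q$ and $\Hom_{\Mod_{\Q,K}^{\cong}}(\Z, H_jR_{\Q,K}(M^c(X))) = H_{j,0}^{\varpi,c}(X)_\Q$ by the computation recalled in \S \ref{BdeR}, so that under these identifications the top map is $r_{j,0}^\varpi\circ c\ell_{j,0} = c\ell_{j,0}^\varpi$ and the left vertical map $a\ell b_{j,0}^{M^c(X)}$ is the Borel--Moore motivic Albanese $a\ell b_j^c$ of \eqref{hcyclemap}, by \cite[Chap. 13]{BVK}.

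The cohomological case runs in exactly the same way with $M = M(X)^*(d)[2d]$. Here $\LAlb(M) = \LAlb^*(X)$ and $\LA{j}(M) = \LA{j}^*(X)$, so the bottom arrow of Lemma \ref{key} reads $H^{2d-j}_{\eh,(1)}(X,\Q(d)) = \Hom(\Z[j],\LAlb^*(X)) \to \Hom(\Z, T_{\rm BdR}^\Q(\LA{j}^*(X))) = T_\omega(\LA{j}^*(X))$, again surjective over $\bar\Q$, the source being identified via $\HH^{-j}(K,\LAlb^*(X))_\Q = H^{2d-j}_{\eh,(1)}(X,\Q(d))$ from \eqref{cohalbmap}. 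On the top row, adjunction together with $M(X)^*(d) = \ihom_\et(M(X),\Z(d))$ gives $H_{j,0}(M)_\Q = \Hom_{\DM^\eff_\et}(\Z[j], M(X)^*(d)[2d])_\Q \cong \Hom_{\DM^\eff_\et}(M(X),\Z(d)[2d-j])_\Q = H^{2d-j}_\eh(X,\Q(d))$, while $\Hom_{\Mod_{\Q,K}^{\cong}}(\Z, H_jR_{\Q,K}(M(X)^*(d)[2d])) = H^{2d-j,d}_\varpi(X)_\Q$ by \S \ref{BdeR}, and the left vertical map is the cohomological motivic Albanese $a\ell b_j^*$ of \eqref{cohalbmap}. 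Thus the bottom arrow of Lemma \ref{key} is $r^{2d-j,d}_{\varpi,(1)}$.

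The substantive input --- surjectivity of $\pi_j$ in \eqref{ext} over $K = \bar\Q$ and full faithfulness of the de Rham--Betti realisation of $1$-motives \cite[Thm. 2.7.1]{ABVB} --- is already built into Lemma \ref{key}, so no fresh argument is needed for the surjectivity claims; everything else is a matter of compatibilities. The step I expect to require the most care is verifying that the abstract comparison map $a_M : \alpha M \to \Tot\LAlb(M)$, for $M = M^c(X)$ and $M = M(X)^*(d)[2d]$, induces on $\Hom(\Z[j],-)$ precisely the geometrically defined Albanese maps $a\ell b_j^c$ and $a\ell b_j^*$, and (rationally) that the cotorsion in $\LAlb^c(X),\LAlb^*(X) \in D^b({}_t\M)$ is harmless, which it is since ${}_t\M_\Q = \cM_1^\Q$. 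Both points are supplied by \cite[Chap. 13 \& \S 16.3]{BVK}, which we may quote.
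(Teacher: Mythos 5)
Your proposal is correct and follows the same route as the paper: the paper's proof of Theorem \ref{1-motivic} is precisely the one-line specialisation of Lemma \ref{key} to $M = M^c(X)$ and $M = M(X)^*(d)[2d]$, with the identifications of sources, targets and Albanese maps supplied by \S\ref{dualsect}, \S\ref{BdeR} and \cite{BVK} exactly as you indicate. Your expanded verification of the compatibilities (effectivity of the dual motive, matching $a_M$ with $a\ell b_j^c$, $a\ell b_j^*$) is a faithful unpacking of what the paper leaves implicit.
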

\begin{proof}
    It follows from Lemma \ref{key} for $M=M^c(X)$ or $M=M(X)^*(d)[2d]$.
\end{proof}
\subsection{Case j=0}
Let $X$ be equidimensional of dimension $d$. Denote by $\pi_0^c(X)$ the disjoint union of $\pi_0(X_i)$ where $X_i$ runs through the proper connected components of $X$: this is also called the scheme of constants with compact support (see \cite[Def. 10.6.1]{BVK}). 

If $X$ is not smooth, let $p: \tilde X\to X$ be an abstract blow-up of $S\subset X$ such that $\tilde X$ is smooth and $\tilde S = p^{-1}(S)$ (see \cite[Def. 12.21]{VL}, \cf \cite[\S 8.2.6]{BVK}). We have:
\begin{lemma} \label{00}
Let $X$ be over $K =\bar \Q$ and $d =\dim (X)$. 
 We have that
  $$H_{0, 0}^\varpi(X)\cong \LA{0}(X)\cong \Z[\pi_0(X)],$$
 $$H_{0, 0}^{\varpi,c}(X)\cong \LA{0}^c(X)\cong \Z[\pi_0^c(X)] $$
 and
 $$H^{2d, d}_\varpi(X) \cong \LA{0}^*(X)\cong \Z[\pi_0^c(\tilde X)]\cong H^{2d, d}_\varpi(\tilde X)$$ where $\tilde X$ is any resolution.
\end{lemma}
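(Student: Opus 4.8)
The plan is to reduce the three isomorphism chains to two observations: in homological degree $0$ the Albanese $1$-motives are lattices built from connected components, and a lattice carries no periods, so its period realisation is the lattice itself. Thus it suffices to identify, on the one hand, the $\LA{0}$-groups with $\Z[\pi_0]$-type lattices, and on the other hand the $\varpi$-groups with the same lattices. I begin with the first: $\LA{0}(X)\cong\Z[\pi_0(X)]$ and $\LA{0}^c(X)\cong\Z[\pi_0^c(X)]$ are the weight-$0$ (lattice) parts of $\LAlb(X)$ and $\LAlb^c(X)$, coming from the construction of the motivic Albanese in \cite{BVK}, which in homological degree $0$ returns the $0$-th Suslin, resp.\ Borel--Moore, homology lattice, i.e.\ the free group on the (proper) connected components. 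For $\LA{0}^*(X)$ I use that $X$ equidimensional of dimension $d$ gives $\LAlb^*(X)=\LAlb(M(X)^*(d)[2d])$; applying $\LAlb$ to the $(d)[2d]$-twisted dual of the abstract blow-up triangle $M(\tilde S)\to M(S)\oplus M(\tilde X)\to M(X)\to M(\tilde S)[1]$ (with $p\colon\tilde X\to X$, $S$ the non-smooth locus, $\tilde S=p^{-1}(S)$) and using that $\dim S,\dim\tilde S<d$ kills the degree-$0$ lattice parts coming from $M(S)^*(d)[2d]$ and $M(\tilde S)^*(d)[2d]$, one gets $\LA{0}^*(X)\cong\LA{0}^*(\tilde X)$; and for $\tilde X$ smooth $M(\tilde X)^*(d)[2d]=M^c(\tilde X)$ by Lemma \ref{dual}, so $\LA{0}^*(\tilde X)=\LA{0}^c(\tilde X)\cong\Z[\pi_0^c(\tilde X)]$.

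Next I compute $H_{0,0}^\varpi(X)=H_{0,0}^\an(X)\cap H_0^\dr(X)$ and the other two period groups directly. Since $M(X)$ is dualisable in $\DM_\gm$, the projection formula gives $M(X)\otimes\beta^\sigma(\Z)\simeq\beta^\sigma\beta_\sigma M(X)$ integrally, whence $H_{0,0}^\an(X)=H_0(\beta_\sigma M(X))=H_0(X_\an,\Z)=\Z[\pi_0(X)]$ (connected components of $X$ and of $X_\an$ agree since $K=\bar\Q$ is algebraically closed); likewise $H_{0,0}^{\an,c}(X)=H_0(\beta_\sigma M^c(X))=H^{BM}_0(X_\an,\Z)=\Z[\pi_0^c(X)]$, because $\beta_\sigma$ sends $M^c(X)$ to the Borel--Moore chain complex of $X_\an$ and degree-$0$ Borel--Moore homology sees exactly the compact, hence proper, connected components. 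On the de Rham side the comparison of Lemma \ref{APLemma} forces $\dim_K H_0^\dr(X)=|\pi_0(X)|$ and $\dim_K H_0^{\dr,c}(X)=|\pi_0^c(X)|$; $\Aff^1$-invariance of $H_0^\dr$, the computation $H_0^\dr(C)=K$ for a connected smooth curve $C$ (reducing via normalisation and localisation to $\Aff^1$ and $\P^1$, where the point class is a generator), and the chain-of-curves description of a connected variety identify these canonically with $K[\pi_0(X)]$ and $K[\pi_0^c(X)]$ by fundamental classes of points. For the cohomological side one argues the same way after noting that $H^{2d-1}$ and $H^{2d}$, both Betti and de Rham, of the $(<d)$-dimensional $S$ and $\tilde S$ vanish, so the blow-up Mayer--Vietoris sequences yield $H^{2d}(X_\an,\Z_\an(d))\cong H^{2d}(\tilde X_\an,\Z_\an(d))$ and $H^{2d}_\dr(X)\cong H^{2d}_\dr(\tilde X)$, which for $\tilde X$ smooth are $\Z[\pi_0^c(\tilde X)]$ and $K[\pi_0^c(\tilde X)]$.

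It remains to intersect. Under the period isomorphism of Lemma \ref{APLemma} the fundamental class of a point on the Betti side corresponds to the one on the de Rham side with no transcendental factor --- for the $\Z_\an(d)$-twisted top cohomology this is precisely the statement that the period $(2\pi i)^d$ is absorbed into the Tate twist --- so the Betti $\Z$-lattice and the de Rham $K$-lattice of component classes coincide inside $\C[\pi_0]$, and taking the intersection gives $H_{0,0}^\varpi(X)=\Z[\pi_0(X)]$, $H_{0,0}^{\varpi,c}(X)=\Z[\pi_0^c(X)]$ and $H^{2d,d}_\varpi(X)\cong H^{2d,d}_\varpi(\tilde X)=\Z[\pi_0^c(\tilde X)]$ (the first of these also following from Lemma \ref{pdual} applied to the smooth $\tilde X$). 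To see that these really are the claimed isomorphisms \emph{with} the Albanese $1$-motives and compatibly with the regulators, observe that $H_0 R_{\Q,K}(M(X))$ is pure of weight $0$, hence of level $\le 1$, so it equals $T_{\rm BdR}^\Q(\LA{0}(X))$ by Lemma \ref{key}; applying $\Hom(\Z,-)$ recovers $H_{0,0}^\varpi(X)_\Q\cong T_\varpi^\Q(\LA{0}(X))=\Q[\pi_0(X)]$ with $r_{0,0}^\varpi$ identified with the canonical degree-on-each-component surjection from Suslin homology, and the integral refinement is the previous step; the same runs verbatim with $M^c(X)$ and $M(X)^*(d)[2d]$ in place of $M(X)$.

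The step I expect to be the main obstacle is the resolution-invariance invoked twice --- $\LA{0}^*(X)\cong\LA{0}^*(\tilde X)$ and $H^{2d,d}_\varpi(X)\cong H^{2d,d}_\varpi(\tilde X)$. Both rest on threading the abstract blow-up triangle through all three realisations (motivic, Betti, de Rham) and verifying that the contributions of the $(<d)$-dimensional loci $S$ and $\tilde S$ die in the top and next-to-top degrees; carrying this out integrally, rather than rationally where one could instead invoke Theorem \ref{1-motivic} and Lemma \ref{key}, is where the actual bookkeeping lies.
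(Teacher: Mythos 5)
Your proposal is correct in substance and arrives at all three isomorphism chains, but it takes a noticeably more computational route than the paper. The paper's proof is essentially a reduction to known computations: it notes that in degree $0$ the Betti--de Rham homology (resp.\ the $(2d,d)$ Betti--de Rham cohomology) is $T_{\rm BdR}$ of $\LA{0}^\dag(X)$, quotes \cite[Prop.~10.4.2~b) \& 10.6.2~b)]{BVK} for $\LA{0}(X)=[\Z[\pi_0(X)]\to 0]$ and $\LA{0}^c(X)=[\Z[\pi_0^c(X)]\to 0]$, asserts $\LA{0}^*(X)=[\Z[\pi_0^c(\tilde X)]\to 0]$ by the same computation as \cite[Lemma~12.12.1 \& Thm.~12.12.6]{BVK}, and then concludes at once from the fact that these are Artin motives over $K=\bar\Q$ (so their periods are algebraic and the Betti lattice meets the de Rham $K$-structure in the full lattice); the cohomological chain is finished by remarking $H^{2d,d}_{\rm BdR}(X)\cong H^{2d,d}_{\rm BdR}(\tilde X)$ and using $M(\tilde X)^*(d)[2d]\cong M^c(\tilde X)$ with Lemma~\ref{pdual}. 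You instead compute both sides of the intersection directly (Betti and Borel--Moore degree-$0$ homology as component lattices, the de Rham side by dimension count through the period isomorphism), intersect via the algebraicity of the point-class periods, and re-derive the $\LA{0}^*$ computation from the abstract blow-up triangle instead of citing it; the final tie-back to $\LA{0}$ through Lemma~\ref{key} matches the paper's viewpoint. What your route buys is an explicit, realisation-level identification of the period groups that does not lean on the $1$-motivic citations; what the paper's route buys is brevity, precisely by outsourcing the two pieces of bookkeeping you yourself flag at the end.

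Two points in your write-up need to be made explicit to close it. First, your integral identification $H_{0,0}^{\an}(X)\cong H_0(X_\an,\Z)$ rests on the projection formula for the dualisable object $\alpha M(X)$ in $\DM_\et$; this is available here (characteristic zero, geometric motives, and $\alpha$ is monoidal), but it is a strictly stronger input than the paper's Lemma~\ref{compareBM}, which is only stated rationally, so say so. Second, for $\LA{0}^*(X)\cong\LA{0}^*(\tilde X)$ the long exact sequence of the dualised blow-up triangle requires, beyond the vanishing of $\LA{0}$ of $M(S)^*(d)[2d]$ and $M(\tilde S)^*(d)[2d]$ for dimension reasons, that the boundary map out of $\LA{1}(M(\tilde S)^*(d)[2d])$ hit no lattice part of $\LA{0}^*(X)$; this does hold (in ${}_t\M$ there are no nonzero maps from the semiabelian part into a lattice), but it is exactly the computation the paper disposes of by the citation to \cite{BVK}, so either carry it out or cite it as the paper does.
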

\begin{proof} For $j=0$ we have that $H_{0, 0}^{\rm BdR, \dag}(X)\cong T_{\rm BdR}\LA{0}^\dag(X)$ and 
$H^{2d, d}_{\rm BdR}(X) \cong T_{\rm BdR}\LA{0}^*(X)$. Since  $\LA{0}(X)= [\Z[\pi_0(X)] \to 0]$, $\LA{0}^c(X)= [\Z[\pi_0^c(X)] \to 0]$ ( \cite[Prop. 10.4.2 b) \& 10.6.2 b)]{BVK}) and $\LA{0}^*(X)= [\Z[\pi_0^c(\tilde X)] \to 0]$ (same computation as for \cite[Lemma 12.12.1 \& Thm. 12.12.6]{BVK}) are Artin motives we get all the claims. Just remark that $H^{2d, d}_{\rm BdR}(X) \cong H^{2d, d}_{\rm BdR}(\tilde X)$. For $X$ smooth $M(X)^*(d)[2d]\cong M^c(X)$ and we have that 
$$H^{2d, d}_\varpi(X) \cong H_{0,0}^\varpi(X)=\Z[\pi_0^c(X)]$$
from Lemma \ref{pdual}.
\end{proof}
\begin{lemma}  \label{blow}
For $p: \tilde X\to X$ an abstract blow-up of $S\subset X$ such that $X$ is reduced, $\tilde X$ is smooth and $\tilde S = p^{-1}(S)$ we have an exact sequence
 $$H^{2d-1}_{\cdh}(\tilde S, \Z (d))\to H^{2d}_\cdh(X, \Z (d)) \longby{p^*} CH_0(\tilde X)\to 0$$ 
 for $d=\dim (X)$.
\end{lemma}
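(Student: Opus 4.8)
The plan is to extract the three-term sequence from the long exact cohomology sequence of the $\cdh$-descent (abstract blow-up) triangle of the square $(\tilde X,\tilde S, X, S)$, the contributions of the low-dimensional subschemes $S$ and $\tilde S$ being killed in cohomological degree $2d$.

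First I would invoke $\cdh$-descent. Since $(\tilde X,\tilde S, X, S)$ is an abstract blow-up square and we work in characteristic zero (so resolution of singularities holds), there is a distinguished triangle
$$M(\tilde S)\longrightarrow M(S)\oplus M(\tilde X)\longrightarrow M(X)\longrightarrow M(\tilde S)[1]$$
in $\DM^\eff_\cdh$ --- equivalently the presheaf sequence $\Z_\tr(\tilde S)\to \Z_\tr(S)\oplus \Z_\tr(\tilde X)\to \Z_\tr(X)\to 0$ becomes exact after $\cdh$-sheafification (see \cite[Lect.\ 12]{VL}, \cf \cite[\S 10.2]{BVK}). Applying $H^{\ast}_\cdh(-,\Z(d))=\Hom_{\DM^\eff_\cdh}(-,\Z(d)[\ast])$ gives a long exact sequence
$$\cdots\to H^{p-1}_\cdh(\tilde S,\Z(d))\to H^{p}_\cdh(X,\Z(d))\to H^{p}_\cdh(S,\Z(d))\oplus H^{p}_\cdh(\tilde X,\Z(d))\to H^{p}_\cdh(\tilde S,\Z(d))\to\cdots$$
in which the $\tilde X$-component of the second map is the pullback $p^\ast$.

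Next I would record the vanishing. As $X$ is reduced and equidimensional of dimension $d$ over a field of characteristic zero, its non-smooth locus is nowhere dense; $S$ contains it (because $\tilde X$ is smooth and $p$ is an isomorphism over $X\setminus S$) and $p$ is proper and birational, so $\dim S\le d-1$ and $\tilde S=p^{-1}(S)$ is nowhere dense in $\tilde X$, whence $\dim\tilde S\le d-1$ as well. Since $\Z(d)$ has $\cdh$-cohomology sheaves concentrated in degrees $\le d$ and $\mathrm{cd}_\cdh(T)\le\dim T$ for any $T$ (equivalently, by a Noetherian induction through the same blow-up triangles reducing to the classical Nisnevich vanishing on smooth schemes), one has $H^{p}_\cdh(T,\Z(d))=0$ for $p>d+\dim T$; applied to $T=S$ and $T=\tilde S$ with $p=2d>d+(d-1)$ this yields $H^{2d}_\cdh(S,\Z(d))=0=H^{2d}_\cdh(\tilde S,\Z(d))$.

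Finally I would substitute these into the long exact sequence around $p=2d$: the summand $H^{2d}_\cdh(S,\Z(d))$ and the term $H^{2d}_\cdh(\tilde S,\Z(d))$ immediately to its right both vanish, so what remains is the exact sequence
$$H^{2d-1}_\cdh(\tilde S,\Z(d))\longrightarrow H^{2d}_\cdh(X,\Z(d))\longby{p^\ast} H^{2d}_\cdh(\tilde X,\Z(d))\longrightarrow 0,$$
and I would identify the right-hand group using that $\tilde X$ is smooth and equidimensional of dimension $d$ (a proper birational modification of such an $X$): there $\cdh$- and Nisnevich motivic cohomology agree, so $H^{2d}_\cdh(\tilde X,\Z(d))\cong H^{2d}_\Nis(\tilde X,\Z(d))\cong CH^{d}(\tilde X)=CH_0(\tilde X)$ by Voevodsky's comparison with higher Chow groups (\cite[Lect.\ 12 \& Thm.\ 19.1]{VL}). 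This is precisely the asserted sequence. The one place that is not pure formalism is the vanishing step --- it is exactly there that ``$X$ reduced'' is used, to force $\dim S,\dim\tilde S\le d-1$ --- so that is where I would be most careful; the triangle, the long exact sequence, and the identification $H^{2d}_\cdh(\tilde X,\Z(d))\cong CH_0(\tilde X)$ are either instances of $\cdh$-descent or standard comparisons.
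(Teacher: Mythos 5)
Your proof is correct and follows essentially the same route as the paper: the $\cdh$-descent long exact sequence of the abstract blow-up square, the vanishing $H^{p}_\cdh(Z,\Z(q))=0$ for $p>\dim Z+q$ applied to $S$ and $\tilde S$ (of dimension $\leq d-1$ since $X$ is reduced), and the identification $H^{2d}_\cdh(\tilde X,\Z(d))\cong CH_0(\tilde X)$ for the smooth $\tilde X$. Your write-up merely makes explicit the descent triangle and the comparison with higher Chow groups that the paper leaves implicit.
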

\begin{proof}
   In fact, we have the following long exact sequence
\[\xymatrix{ 
H^{2d-1}_{\cdh}(\tilde S, \Z (d))\ar[r]& H^{2d}_\cdh(X, \Z (d))\ar[r]  & H^{2d}_\cdh(\tilde X, \Z (d))\oplus H^{2d}_\cdh(S, \Z (d))\ar[r] & H^{2d}_\eh(\tilde S, \Z (d))}\]
where $\dim S, \dim \tilde S \leq d-1$ and just recall that $H^{p}_\cdh(Z, \Z (q))=0$ if $p> \dim Z + q$ by cdh-descent, \ie by blow-up induction from the smooth case.\footnote{Thanks to A. Krishna for suggesting to use descent.}
\end{proof}
\begin{thm} \label{zero}
For any $X$ over $K=\bar \Q$ the period regulators $r_{0, 0}^{\varpi}:H_0^{Sus}(X)\to H_{0, 0}^\varpi(X)$, $c\ell_{0,0}^{\varpi}: CH_{0}(X)\to  H_{0, 0}^{\varpi,c}(X)$ and 
$r^{2d, d}_{\varpi}:H^{2d}_{\eh}(X, \Z (d))\to H^{2d, d}_\varpi(X)$ 
are surjections for $d=\dim (X)$.
\end{thm}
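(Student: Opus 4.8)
The plan is to reduce everything to Lemma \ref{00}, which identifies the three targets with the \emph{free} abelian groups $\Z[\pi_0(X)]$, $\Z[\pi_0^c(X)]$ and $\Z[\pi_0^c(\tilde X)]$ — that is, with Artin motives — so that in these top twists the de Rham condition cutting out $H^\varpi$ inside the ambient singular (co)homology is automatically satisfied. It then suffices to exhibit, for each basis vector, a motivic class mapping onto it; and since the period regulators of Corollary \ref{periodreg} are natural in $X$, the classes to use are those of closed points, which over $K=\bar\Q$ are $\bar\Q$-points.

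For $r_{0,0}^\varpi$ and $c\ell_{0,0}^\varpi$ I would argue as follows. Over $K=\bar K$ there are the integral identifications $H_0^{Sus}(X)\cong H_{0,0}(X)$ and $CH_0(X)\cong H_{0,0}^c(X)$ of \cite[Lemma 13.4.1]{BVK}, under which the two maps become the regulators $r_{0,0}^\varpi$ and $r_{0,0}^{\varpi,c}$ of Corollary \ref{periodreg}. Fix a connected component $X_i\subseteq X$ and a closed point $\iota\colon\Spec K\hookrightarrow X_i$. Naturality of $r_{0,0}^\varpi$ — respectively of $r_{0,0}^{\varpi,c}$, along the proper morphism $\iota$ — gives a commutative square comparing the regulator on $\Spec K$ with the one on $X$: on $\Spec K$ the regulator is the identity $\Z=H_{0,0}(\Spec K)\to H_{0,0}^\varpi(\Spec K)=\Z$, and the induced map $\iota_*$ on targets carries the generator to $[X_i]\in\Z[\pi_0(X)]$ — respectively to $[X_i]\in\Z[\pi_0^c(X)]$ when $X_i$ is proper, and to $0$ otherwise. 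As every connected component, resp. every proper connected component, of $X$ contains a closed point, this produces a preimage of every basis vector, whence surjectivity.

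For $r^{2d,d}_\varpi$, which is the case that really requires work, it is enough — by naturality — to show that the composition of $r^{2d,d}_\varpi$ with the change of topology $H^{2d}_\cdh(X,\Z(d))\to H^{2d}_\eh(X,\Z(d))$ is onto, since a surjection from $H^{2d}_\cdh(X,\Z(d))$ forces one from $H^{2d}_\eh(X,\Z(d))$. After replacing $X$ by $X_{\red}$ (which changes none of the groups involved) one picks, by resolution of singularities in characteristic $0$, an abstract blow-up $p\colon\tilde X\to X$ of a closed $S\subset X$ with $\tilde X$ smooth and $\tilde S=p^{-1}(S)$. Then Lemma \ref{blow} makes $p^*\colon H^{2d}_\cdh(X,\Z(d))\to H^{2d}_\cdh(\tilde X,\Z(d))= CH_0(\tilde X)$ surjective, while $p^*\colon H^{2d,d}_\varpi(X)\to H^{2d,d}_\varpi(\tilde X)$ is an isomorphism — on the Betti and de Rham sides this comes down to blow-up descent and the dimension vanishing $H^k(\tilde S_\an,\Z)=0=H^k_\dr(\tilde S)$ for $k\geq 2d-1$, as $\dim\tilde S\leq d-1$ (cf. the proof of Lemma \ref{blow}), and in any case it is recorded in Lemma \ref{00}. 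Naturality of the composite together with injectivity of this latter $p^*$ reduces the claim to the smooth scheme $\tilde X$, where $H^{2d}_\cdh(\tilde X,\Z(d))=CH^d(\tilde X)=CH_0(\tilde X)$, $H^{2d,d}_\varpi(\tilde X)=\Z[\pi_0^c(\tilde X)]$ (Lemma \ref{00}), and the composite is the classical cycle class map; the class of a closed point lying on a proper component $\tilde X_i$ maps to $[\tilde X_i]$ (equivalently, by Lemmas \ref{dual}--\ref{pdual}, this map is $c\ell_{0,0}^\varpi$ for $\tilde X$, already treated), so it is onto, and therefore so is $r^{2d,d}_\varpi$.

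Once Lemmas \ref{00} and \ref{blow} are granted the argument is essentially formal, and the only point that really demands care — and the reason it goes through for $j=0$ but is genuinely harder in higher degrees — is \emph{integrality}: one must know both that intersecting singular (co)homology with de Rham does not shrink the group in this range (exactly Lemma \ref{00}: the $1$-motive computing the target is an Artin motive, so there is no ``de Rham defect'') and that the comparisons $H_0^{Sus}\cong H_{0,0}$ and $CH_0\cong H_{0,0}^c$ hold integrally over $\bar K$, not merely after $\otimes\,\Q$.
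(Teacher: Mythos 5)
Your proposal is correct and follows exactly the route of the paper, whose proof is precisely ``the smooth case plus Lemmas \ref{00} and \ref{blow}'': you identify the targets as the Artin-motive groups via Lemma \ref{00}, hit them with classes of closed points in the smooth/homological cases, and use the blow-up exact sequence of Lemma \ref{blow} together with $H^{2d,d}_\varpi(X)\cong H^{2d,d}_\varpi(\tilde X)$ to reduce $r^{2d,d}_\varpi$ to the smooth case. Your write-up merely makes explicit the diagram chase and naturality that the paper leaves implicit (only note that Lemma \ref{pdual} is a rational statement, so your parenthetical duality remark is a cross-check rather than a substitute for the integral point-class argument you already give).
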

\begin{proof}
    It follows from the smooth case and Lemma \ref{00} and \ref{blow}.
\end{proof}
\begin{remark}\label{zerork}
    Note that the mappings in Theorem \ref{zero} are algebraically defined via Lemmas \ref{cohreproj} - \ref{00} and surjectives over any algebraically closed field.
\end{remark}

\subsection{Case j=1}
Consider $\LA{1}(X) = [L_1\by{u_1}G_1]$. We have that $\LA{1}(X)(K)$ is given by the $K$-points of the $\eh$-Albanese scheme as explained in 
\cite[Prop. 12.6.5 b)]{BVK}. For $X$ normal we have that $L_1=0$ and $G_1$ is the usual Serre-Albanese semiabelian scheme (see  \cite[\S 12.6]{BVK}).

For $\LA{1}^c(X) = [L_1^c\by{u_1^c}G_1^c]$ recall that the 1-motive $\LA{1}^c(X)$ is Cartier dual to 
$$\RA{1}^c(X)= [0\to \Pic^0(\bar X, Z)/\cU]$$ by \cite[Thm. 12.11.1]{BVK} for
$\bar X$ a compactification with closed complement $Z$, and where $\Pic_{(\bar X, Z)/K}$ is the relative Picard $K$-scheme and $\cU$ its unipotent radical. For $X$ proper we have that $\RA{1}^c(X)=\RA{1}(X)= [0\to \Pic^0(X)/\cU]$ is the semi-abelian variety given by the simplicial Picard functor as described in \cite[\S 4.1]{BVS}, the map $u_1^c$ is explicitely described in Prop. 5.1.4 \loccit and, in particular: if $X$ is proper and normal then $L_1^c=0$, $\Pic^0(X)$ is an abelian variety and $G_1^c=\Pic^0(X)^\vee$. For $X$ normal, connected not proper with a normal compactification $\bar X$ and $Z=\bar X-X$ we have that the rank of $L_1^c$ is $\# \pi_0(Z)-1$ by \cite[Cor. 12.11.2]{BVK}.

Moreover, we have $\LA{1}^*(X) = [L_1^*\by{u_1^*}G_1^*]$ which is the Cartier dual of $$\RA{1}^* (X) \cong [\Div_{\bar S/S}^0(\bar X, Y)\by{u_1} \Pic^0{(\bar X, Y)}]$$ by \cite[Thm. 12.12.6]{BVK} where $S$ is the singular locus of $X$, $\bar X$ a normal crossing smooth compactification of $\tilde X$ a resolution of singularities of $X$ with boundary divisor $Y$, $\bar S$ the Zariski closure of the reduced inverse image of $S$ in such a way that $\bar S + Y$ is a reduced normal corossing divisor on $\bar X$, and $\Div_{\bar S/S}^0(\bar X, Y)$ is the group of divisors on $\bar X$ which have support on $\bar S$ disjoint from $Y$, trivial push-forward on $S$ and algebraically equivalent to zero relative to $Y$ (as described in \cite[\S 2.1 - 3.1]{BVS}). For $X$ is proper then $L_1^*=0$ and $\LA{1}^*(X)$ is a semi-abelian variety whose abelian quotient is $\Alb (\tilde X)$. 

If $X$ is smooth 
$\LA{1}^c(X) =\LA{1}^*(X)$
and if it is not proper it is explicitely described in \cite[Prop. 3.1.4]{BVS} as follows:
$L_1^c=L_1^* = \ker \Z[\pi_0(Y)]\by{\gamma} \Z[\pi_0(\bar X)]]$
where $\gamma$ is induced by the mapping that takes a component of $Y$ to the component
of $\bar X$ to which it belongs, $G_1^c=G_1^* = \coker \oplus_i\Alb(Y_i)\to \Alb (\bar X)$
for $Y= \cup Y_i$ a decomposition of $Y$ in irreducioble components and $u_1^c=u_1^*$ is given by $a (y_i-y_j)\in \Alb (\bar X)$ for a choice of (closed) points $y_i\in Y_i$ and $y_j \in Y_j$, where $Y_i$ and $Y_j$ are distinct connected components of $Y$, contained in the same component of $\bar X$, where $a : \cZ^0(X)^0 \to \Alb (\bar X)$ denotes the Albanese mapping for zero-cycles of degree zero.
\begin{lemma} \label{11}
Let $X$ be over $K =\bar \Q$ and $d =\dim (X)$. 
We have that
 $$\ker u_1\cong H_1(X_\an, \Z)_\fr\cap H_1^\dr(X)=H_{1,0}^{\varpi}(X)_\fr,$$
 $$\ker u^c_1\cong H_1^{BM}(X_\an, \Z)_\fr\cap H_1^{\dr,c}(X)=H_{1,0}^{\varpi,c}(X)_\fr $$
 and
$$\ker u^*_1 \cong H^{2d-1}(X_\an, \Z_\an (d))_\fr \cap H^{2d-1}_\dr(X) =H^{2d-1,d}_\varpi (X)_\fr$$ 
where $\fr$ is the free part.
\end{lemma}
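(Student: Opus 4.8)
The plan is to reduce all three statements to a single computation on $1$-motives and then invoke the full faithfulness of the de Rham--Betti realisation of $1$-motives. First note that the rightmost equalities in the three displays are essentially matters of definition: by Lemma~\ref{compareBM} one has $H_{1,0}^{\an}(X)_\fr\cong H_1(X_\an,\Z)_\fr$ and $H_{1,0}^{\an,c}(X)_\fr\cong H_1^{BM}(X_\an,\Z)_\fr$, while in the cohomological case $H^{2d-1}(X_\an,\Z_\an(d))$ is the source itself; since $H_1^\dr(X)$, $H_1^{\dr,c}(X)$ and $H^{2d-1}_\dr(X)$ are $K$-vector spaces, each of $H_{1,0}^\varpi(X)$, $H_{1,0}^{\varpi,c}(X)$, $H^{2d-1,d}_\varpi(X)$ is torsion-free and, up to the (free-part irrelevant) torsion discrepancy between singular and \'etale/Suslin invariants, coincides with the lattice-meets-de-Rham intersection appearing in the lemma. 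It therefore suffices to identify these three period groups with $\ker u_1$, $\ker u_1^c$ and $\ker u_1^*$.

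For this I would invoke the $1$-motivic computation recalled in the discussion of the case $j=0,1$ in the introduction: modulo torsion $H_{1,0}^\varpi(X)\cong T_\omega(\LA{1}(X))$, $H_{1,0}^{\varpi,c}(X)\cong T_\omega(\LA{1}^c(X))$ and $H^{2d-1,d}_\varpi(X)\cong T_\omega(\LA{1}^*(X))$, the isomorphisms being induced by the motivic Albanese. This rests on the fact that $H_1R_{\Q,K}(M)$ is of level $\leq 1$ for $M=M(X)$, $M^c(X)$, $M(X)^*(d)[2d]$ --- the degree-one homology of these motives is $1$-motivic, \cite{BVK} --- so that $(-)_{\leq 1}$ acts as the identity on it and Lemma~\ref{key} identifies $H_1R_{\Q,K}(M)$ with $T_{\rm BdR}^\Q$ of the corresponding Albanese $1$-motive, together with the identifications $\Hom_{\Mod_{\Q,K}^{\cong}}(\Z,H_1R_{\Q,K}(-))=H_{1,0}^\varpi(-)_\Q$ (and their $c$- and $*$-variants) of \S\ref{BdeR}. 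Since the period groups are torsion-free, it remains to compute the free part of $T_\omega$ of the $1$-motives $\LA{1}(X)=[L_1\by{u_1}G_1]$, $\LA{1}^c(X)=[L_1^c\by{u_1^c}G_1^c]$ and $\LA{1}^*(X)=[L_1^*\by{u_1^*}G_1^*]$.

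So let $N=[L\by{u}G]$ be a $1$-motive over $\bar\Q$; recall that $T_\omega(N)$ is the period ($\varpi$-)part of its de Rham--Betti realisation, i.e. the subgroup of the Betti lattice $T_B(N)$ that is $K$-rational in de Rham under the period isomorphism, and that $T_\omega(N)\otimes\Q=\Hom_{\Mod_{\Q,K}^{\cong}}(\Z,T_{\rm BdR}^\Q(N))$. By the full faithfulness of $T_{\rm BdR}^\Q\colon\cM_1^\Q\to\Mod_{\Q,K}^{\cong}$ (\cite[Thm. 2.7.1]{ABVB}, as already used in the proof of Lemma~\ref{key}) the latter group equals $\Hom_{\cM_1^\Q}(\Z(0),N)$, where $\Z(0)=[\Z\to 0]$; and a morphism of $1$-motives $[\Z\to 0]\to[L\by{u}G]$ is nothing but an element $\ell\in L$ with $u(\ell)=0$, so $T_\omega(N)\otimes\Q=(\ker u)\otimes\Q$. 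Since $u$ vanishes on $\ker u$, the sub-$1$-motive $[\ker u\to 0]$ of $N$ has period matrix the identity, giving $\ker u\hookrightarrow T_\omega(N)$; moreover the extension $0\to H_1(G_\an,\Z)\to T_B(N)\to L\to 0$ splits over $\ker u$ (both quotients being torsion-free), so $\ker u$ is a saturated sublattice of $T_B(N)$ and hence $T_\omega(N)\subseteq T_B(N)\cap\big((\ker u)\otimes\Q\big)=\ker u$. Therefore $T_\omega(N)=\ker u$; taking $N=\LA{1}(X),\LA{1}^c(X),\LA{1}^*(X)$ and combining with the previous paragraph proves the three displays.

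I do not expect a deep obstacle specific to this statement. The one non-formal ingredient --- full faithfulness of the de Rham--Betti realisation of $1$-motives --- is imported from \cite[Thm. 2.7.1]{ABVB}, and the $1$-motivic nature of the relevant $H_1$'s is standard \cite{BVK}; what needs care is the passage from the $\Q$-linear period-category computation to the integral statement (handled above by saturation of $\ker u$, together with the modulo-torsion identifications recalled above) and the routine bookkeeping of torsion and cotorsion for the $1$-motives $\LA{1}^\dag(X)\in{}_t\M$, which is harmless on free parts. As by-products one recovers the vanishing statements of the introduction: $\ker u_1^*=0$ when $X$ is proper (then $L_1^*=0$) and $\ker u_1=0$ when $X$ is normal (then $L_1=0$).
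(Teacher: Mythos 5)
Your proposal is correct and follows essentially the paper's own route: identify the free parts of $H_{1,0}^{\varpi}(X)$, $H_{1,0}^{\varpi,c}(X)$, $H^{2d-1,d}_\varpi(X)$ with $T_\varpi(\LA{1}^\dag(X))$ via the $1$-motivic nature of $H_1$ (Lemma \ref{key}, \S \ref{BdeR}), and then compute $T_\varpi([L\by{u}G])=\Hom([\Z\to 0],[L\by{u}G])=\ker u$ from the full faithfulness of the de Rham--Betti realisation \cite[Thm. 2.7.1]{ABVB}. The only divergence is that the paper invokes the full faithfulness integrally, which makes your rational-plus-saturation detour unnecessary; your saturation claim is in fact true, but the stated justification (``splits over $\ker u$, both quotients torsion-free'') is loose, since $\ker u$ need not be saturated in $L$ (e.g. $u$ may send a lattice generator to a torsion point of $G$), and one should instead argue via the description of $T_B(N)$ inside $L\times \Lie G(\C)$, where $nx\in s(\ker u)$ forces the $\Lie G(\C)$-component of $x$ to vanish.
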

\begin{proof} With or without decorations $\dag= c, *$ we have 
$$\ker u^\dag_1=\Hom (\Z, \LA{1}^\dag (X))\iso \Hom (\Z , T_{\rm BdR}(\LA{1}^\dag (X)))=T_\varpi(\LA{1}^\dag (X))$$
where $T_{\rm BdR}$ is fully faithful by \cite[Thm. 2.7.1]{ABVB}, $T_\varpi(\LA{1} (X))= H_{1,0}^\varpi(X)_\fr$, $T_\varpi(\LA{1}^c (X))= H_{1,0}^{\varpi,c}(X)_\fr$ and, finally,  $T_\varpi(\LA{1}^* (X))=H^{2d-1,d}_\varpi (X)_\fr$. 
\end{proof}
\begin{thm} \label{one}
Let $X$ be over $K=\bar \Q$. We have that $H^{2d-1}(X_\an, \Z_\an (d))_\fr\cap H^{2d-1}_\dr(X)=0$ if $X$ is proper and
$H_1(X_\an, \Z)_\fr\cap H_1^\dr(X)=0$ if $X$ is normal.
 
In general, we have an injection
$$r_\varpi^{2d-1,d}\otimes \Q/\Z: H^{2d-1}_\cdh(X, \Z (d))\otimes\Q/\Z\to  H^{2d-1,d}_\varpi (X)\otimes\Q/\Z$$
which is also surjective if and only if the surjection  
$$a\ell b_{0,\tors}^*: H^{2d}_\cdh(X,\Z (d))_\tors\to \LA{1}^*(X)(K)_\tors$$ 
induced by $a\ell b_{0}^*$ in Proposition \ref{highalb} is an injection. Moreover, we get that
$$c\ell^\varpi_{1,0}\otimes \Q/\Z:CH_0(X, 1)\otimes\Q/\Z\to H_{1, 0}^{\varpi,c}(X)\otimes\Q/\Z$$
is an injection which is also surjective if and only if the surjection  
$$a\ell b_{0,\tors}^c: CH_0(X)_\tors\to \LA{1}^c(X)(K)_\tors$$ induced by
$a\ell b_{0}^c$ in  Proposition \ref{highalb} is an injection. Finally, 
$$r^\varpi_{1,0}\otimes \Q/\Z:H_0^{Sus}(X)\otimes\Q/\Z\to H_{1, 0}^{\varpi}(X)\otimes\Q/\Z$$
is an injection which is also surjective if and only if the surjection  
$$a\ell b_{0,\tors}: H_0^{Sus}(X)_\tors\to \LA{1}(X)(K)_\tors$$ induced by
$a\ell b_{0}$ in  Proposition \ref{highalb} is an injection.
\end{thm}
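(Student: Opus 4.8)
The plan is to read off the two vanishing assertions from Lemma~\ref{11}, and to prove the three ``$\otimes\,\Q/\Z$'' statements in parallel by one mechanism: reduce the $\Q/\Z$-ification of the degree-$1$ period regulator to that of the degree-$1$ motivic Albanese map, and then use a multiplication-by-$n$ (Bockstein) comparison to trade it for the degree-$0$ torsion Albanese map of Proposition~\ref{highalb}, whose surjectivity is the generalised Ro\u{\i}tman theorem of \cite[Chap.~13]{BVK}. For the vanishing: if $X$ is proper then $\LA{1}^*(X)=[L_1^*\by{u_1^*}G_1^*]$ has $L_1^*=0$ (the semi-abelian variety with abelian quotient $\Alb(\tilde X)$ recalled above), so $\ker u_1^*=0$, and by Lemma~\ref{11} this group is $H^{2d-1}(X_\an,\Z_\an(d))_\fr\cap H^{2d-1}_\dr(X)$; likewise $X$ normal forces $L_1=0$, hence $\ker u_1=0=H_1(X_\an,\Z)_\fr\cap H_1^\dr(X)$.

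For the other three statements, fix the decorated motive $N=M(X)^*(d)[2d]$, $M^c(X)$ or $M(X)$. Since $-\otimes\Q/\Z$ only sees mod-torsion parts, Lemma~\ref{11} makes the target of the regulator, tensored with $\Q/\Z$, equal to $\ker u_1^\dag\otimes\Q/\Z$; and since in the extension \eqref{ext} with $j=1$ the kernel $\LAlb_2^\dag(X)(K)$ is divisible, one has $\HH^{-1}(K,\LAlb^\dag(X))\otimes\Q/\Z\iso\ker u_1^\dag\otimes\Q/\Z$. Because the period regulator, restricted to the $1$-motivic part, is computed by the motivic Albanese composed with the de Rham--Betti realisation of $1$-motives (Lemma~\ref{key} and \cite[Cor.~3.1.3]{ABVB}), and $T_{\rm BdR}$ is exact, faithful, and --- over $K=\bar\Q$ --- full (\cite[Thm.~2.7.1]{ABVB}), these identifications turn $r^{2d-1,d}_\varpi\otimes\Q/\Z$ into $a\ell b^*_{1}\otimes\Q/\Z\colon H^{2d-1}_\eh(X,\Z(d))\otimes\Q/\Z\to\HH^{-1}(K,\LAlb^*(X))\otimes\Q/\Z$, and similarly for $c\ell^\varpi_{1,0}$ and $r^\varpi_{1,0}$.

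Next I would compare, via the motivic Albanese transformation $a\colon\alpha N\to\Tot\LAlb^*(X)$, the Bockstein sequences attached to $N\by{n}N\to N/n\to N[1]$ and to $\LAlb^*(X)\by{n}\LAlb^*(X)\to\LAlb^*(X)/n\to\LAlb^*(X)[1]$. This yields a ladder of short exact sequences whose left vertical arrow is $a\ell b^*_1/n$, whose right vertical arrow is the map $H^{2d}_\eh(X,\Z(d))[n]\to\HH^0(K,\LAlb^*(X))[n]$, and whose middle vertical arrow $\phi_n\colon H^{2d-1}_\eh(X,\Z/n(d))\to\HH^{-1}(K,\LAlb^*(X);\Z/n)$ is an isomorphism --- here one invokes \cite[Chap.~13]{BVK}: with $\Z/n$-coefficients the motivic Albanese induces an isomorphism on degree-$\le 1$ cohomology. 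Passing to the colimit over $n$, the snake lemma gives that $a\ell b^*_1\otimes\Q/\Z$, hence $r^{2d-1,d}_\varpi\otimes\Q/\Z$, is injective with cokernel $\ker\bigl(H^{2d}_\eh(X,\Z(d))_\tors\to\HH^0(K,\LAlb^*(X))_\tors\bigr)$. As \eqref{ext} with $j=0$ (again a divisible-kernel argument) gives $\HH^0(K,\LAlb^*(X))_\tors=\LA{1}^*(X)(K)_\tors$ and identifies this arrow with $a\ell b^*_{0,\tors}$ of Proposition~\ref{highalb}, and since $a\ell b^*_{0,\tors}$ is surjective by \cite[Chap.~13]{BVK}, the cokernel vanishes precisely when $a\ell b^*_{0,\tors}$ is injective, i.e.\ an isomorphism. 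The same argument with $M^c(X)$ (resp.\ $M(X)$) gives the statements for $c\ell^\varpi_{1,0}\otimes\Q/\Z$ and $a\ell b^c_{0,\tors}\colon CH_0(X)_\tors\to\LA{1}^c(X)(K)_\tors$ (resp.\ for $r^\varpi_{1,0}\otimes\Q/\Z$ and $a\ell b_{0,\tors}\colon H_0^{Sus}(X)_\tors\to\LA{1}(X)(K)_\tors$), using $CH_0(X,1)\cong H^{BM}_{1,0}(X)$ and $H_1^{Sus}(X)=H_{1,0}(X)$ over $\bar K$.

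The main obstacle is the isomorphism $\phi_n$, i.e.\ the statement that with $\Z/n$-coefficients the motivic Albanese is an isomorphism on degree-$\le 1$ (co)homology, together with the attendant identification of the Bockstein connecting map with the degree-$0$ Albanese; closely tied to it is upgrading the reduction of the second paragraph from the rational compatibility of Lemma~\ref{key} to an integral compatibility of $r_\varpi$ with the Albanese on the $1$-motivic part, for which the exactness and fullness of $T_{\rm BdR}$ on $1$-motives over $\bar\Q$ are the essential tools. The remaining bookkeeping --- replacing $\eh$- by $\cdh$-cohomology, passing the $1$-motives from $\M$ to ${}_t\M$ and absorbing cotorsion ``up to finite groups'' as in Lemma~\ref{cohreproj}, and checking that $\LA{k}^\dag(X)$ lie in the good range for $k=0,1,2$ --- is routine and must be done for each of the three decorations.
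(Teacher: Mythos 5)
Your proposal is correct and follows essentially the same route as the paper: the vanishings come from Lemma \ref{11} together with $L_1^*=0$ (proper) and $L_1=0$ (normal), and the three $\Q/\Z$-statements are obtained from a ladder of coefficient (Bockstein) exact sequences comparing motivic to $1$-motivic cohomology via the Albanese map, with the middle vertical isomorphism supplied by the torsion-coefficient results of \cite[Chap. 13]{BVK} (precisely Thm. 13.7.3, Prop. 13.6.1, Cor. 13.4.4) and the outer identifications by Lemmas \ref{cohreproj}, \ref{key} and \ref{11}, followed by a snake-lemma chase. The only cosmetic difference is that you work at finite level $\Z/n$ and pass to the colimit, whereas the paper writes the $\Q/\Z$-coefficient sequences directly.
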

\begin{proof} The claimed vanishings are consequences of Lemma \ref{11}. Consider the following commutative diagram with exact rows, induced by the change of coefficients: 
\[\xymatrix{ 
0\to H^{2d-1}_\cdh(X, \Z (d))\otimes\Q/\Z\ar[d]_{\delta^*_1\otimes \Q/\Z}\ar[r]& H^{2d-1}_\cdh(X,\Q/\Z (d))\ar[d]^{||}\ar[r]& 
H^{2d}_\cdh(X,\Z (d))_\tors\to 0\ar[d]^{a\ell b_{0,\tors}^*}\\ 
0 \to H^{2d-1}_{(1)}(X, \Z (d))\otimes\Q/\Z\ar[r]& H^{2d-1}_{(1)}(X, \Q/\Z (d))\ar[r]& H^{2d}_{(1)}(X, \Z (d))_\tors \to 0
}\]
where: $H^{2d-1}_{(1)}(X, \Z (d))\otimes\Q/\Z\cong \ker u^*_1\otimes\Q/\Z$ and  $H^{2d}_{(1)}(X, \Z (d))_\tors\cong (\coker u_1^*)_\tors$ by Lemma \ref{cohreproj}, $\delta^*_1\otimes \Q/\Z = r_\varpi^{2d-1,d}\otimes \Q/\Z$ by Lemmas \ref{11}-\ref{key}, the middle vertical is an isomorphism by \cite[Thm. 13.7.3]{BVK}. The other claims are then clear by diagram chase. Similarly, we also have:
\[\xymatrix{ 
0\to CH_0(X, 1)\otimes\Q/\Z\ar[d]_{\delta_1^c\otimes \Q/\Z}\ar[r]& H_{1}(X, \Q/\Z)\ar[d]^{||}\ar[r]& 
CH_0(X)_\tors\to 0\ar[d]^{a\ell b_{0,\tors}^c}\\ 
0 \to H_{1,0}^{(1),c}(X, \Z)\otimes\Q/\Z\ar[r]& H_{1,0}^{(1),c}(X, \Q/\Z)\ar[r]& H_{0,0}^{(1),c}(X, \Z)_\tors \to 0
}\]
where: $H_{1,0}^{(1),c}(X, \Z)\otimes\Q/\Z\cong \ker u^c_1\otimes\Q/\Z$ and  $H_{0,0}^{(1),c}(X, \Z (d))_\tors\cong (\coker u_1^c)_\tors$, $\delta^c_1\otimes \Q/\Z = c\ell^\varpi_{1,0}\otimes \Q/\Z$ by the same  Lemmas \ref{cohreproj}-\ref{11}-\ref{key}, and the middle vertical is an isomorphism by \cite[Prop. 13.6.1]{BVK}. Finally, for Suslin homology, the same arguments apply using \cite[Cor. 13.4.4]{BVK}.
\end{proof}
\begin{remarks}\label{onerk}
    a) Over $K=\bar K$ the snake Lemma applied to the diagrams in the proof of Theorem \ref{one} is providing the connecting homomorphisms $\ker u^*_1\otimes\Q/\Z\to H^{2d}(X, \Z (d))_\tors$, $\ker u^c_1\otimes\Q/\Z\to CH_0(X)_\tors$ and $\ker u_1\otimes\Q/\Z\to H_0^{Sus}(X)_\tors$  as already stated in \cite[Cor. 13.6.2, 13.7.4 \& 13.5.2]{BVK}. Over $K=\bar \Q$ the period conjecture predicts the vanishing of these connecting homomorphisms: we then either may optimistically expect that this holds true or try to find a counterexample.

    b) For $X$ smooth and proper we have that $a\ell b_{j}=a\ell b_{j}^c=a\ell b_{j}^*$ in Proposition \ref{highalb}, $\LAlb_{0}(X)(K)= \Z[\pi_0(X)]$, $\LAlb_{1}(X)(K)= \Alb(X)(K)$ are $K$-points of the Albanese variety, $\LAlb_{2}(X)(K)= \NS(X)^*$ (modulo Enriques torsion) is the torus given by the N\'eron-Severi and $\LAlb_{j}(X)(K)=0$ for $j\geq 3$. Therefore
$a\ell b_{0}:CH_0(X)_0\to \Alb(X)(K)$ is the classical Albanese map for zero cycles which have degree $0$ on each component and the map
$a\ell b_{1}:CH_0(X,1)\to \NS(X)^*$ is dual of the classical cycle map. In fact, we have $CH_0(X,j)\cong H^{2d-j}_\et(X, \Z (d))$ and $H^{2d-j}_\et(X, \Z (d))\otimes \Q/\Z=0$ for $j\neq 0$ (by \cite[Thm. A.13]{ABVB}) whence 
$H_{j+1}(X,\Q/\Z)\cong CH_0(X,j)_\tors$ for all $j$. We have that $H_{j+1}(X,\Q/\Z)\to H_{j+1}^{(1)}(X,\Q/\Z)$ is an iso for $j=0$ and for $j=1$ is a surjection (by \cite[Cor. 13.4.4]{BVK}). Thus we get the classical Ro\u{\i}tman theorem that $a\ell b_{0,\tors}$ is an isomorphism and the surjection
$a\ell b_{1,\tors}:CH_0(X,1)_\tors\to \NS(X)^*_\tors$ which is dual to the cycle map $\NS (X)\otimes \Q/\Z\into H^2_\et(X,\Q/\Z(1))$ with cokernel ${\rm Br}(X)_\tors$ (a refinement of \cite[Thm. 13.4.5 c) \& Rem. 13.4.6]{BVK}).
\end{remarks}

\end{document}